\newtheorem{proposition}{Proposition}[section]
\newtheorem{theorem}{Theorem}[section]
\newtheorem{lemma}[proposition]{Lemma}
\newtheorem{definition}[proposition]{Definition}
\newtheorem*{remark}{Remark}
\title{Construction of periodic solutions of multi-dimensional nonlinear wave equations with unbounded perturbation}
\author{Huining Xue \footnotemark[1] $^1$,  Zuhong You \thanks{The work was supported  by National Natural Science Foundation of China ( Grant NO. 12371189).} $^1$ and Xiaoping Yuan \footnotemark[1] $^1$  \\
		$^{1}$School of Mathematical Sciences, Fudan University, Shanghai 200433, P. R. China   }
\date{}
\begin{document}
\maketitle
\begin{abstract}
    By applying the Craig-Wayne-Bourgain (CWB) method, we establish the persistence of periodic solutions to multi-dimensional nonlinear wave equations (NLW) with unbounded perturbation.
\end{abstract}
\tableofcontents

\section{Introduction}
Over the past few decades, significant progress has been made in the KAM (Kolmogorov-Arnold-Moser) theory for nonlinear Hamiltonian partial differential equations (PDEs). The first existence results were established by Kuksin\cite{Kuk87} and Wayne\cite{Way90}, who studied nonlinear wave (NLW) and nonlinear  Schr\"{o}dinger equations (NLS) in one spatial dimension under Dirichlet boundary conditions.
Two main approaches have emerged: the classical KAM technique (e.g., \cite{Kuk87}, \cite{Way90},  \cite{Kuk93}, \cite{Pos96}, \cite{LY00}, \cite{eliasson2010kam},  \cite{yuan2021kam}) and the Craig-Wayne-Bourgain (CWB) method (e.g.,  \cite{craig1993newton}, \cite{bourgain1994construction},\cite{bourgain1995construction}, \cite{bourgain1998quasi},\cite{bourgain2005green}, \cite{BP11duke}, \cite{berti2013quasi}, \cite{wang2016energy}, \cite{BB20} ). There are too many works to list here in this field. 

When considering unbounded perturbations, where the nonlinearity involves derivatives,  KAM theory faces significant challenges. The first results for unbounded perturbations were pioneered by Kuksin \cite{Kuk98kdv},\cite{Kuk00},  who studied small denominator equations with large variable coefficients and developed applicable KAM theorems to investigate the persistence of finite-gap solutions for the KdV equation. The estimate concerning the small-denominator equation with
large variable coefficients is now called the Kuksin lemma\cite{Kuk87}. 
Later, Liu and Yuan\cite{LY10spec}\cite{LY11dnls} extended the Kuksin lemma to the limiting case and established the KAM theorem for quantum Duffing oscillators, derivative nonlinear Schrödinger equations (DNLS) and and Benjamin-Ono equations. The Italian school (e.g., \cite{BBP13dnlw}, \cite{BBM14}, \cite{BBM16kdv}, \cite{BBE18}) has developed a novel approach for addressing unbounded perturbations.
It is worth mentioning that all these approaches are restricted to the case where the spatial dimension is equal to $1$ and rely on classical KAM techniques.

In our recent work on multi-dimensional nonlinear Schr\"{o}dinger equations (NLS),  we applied the CWB method to study periodic response solutions of NLS with fractional derivative perturbations (unbounded ones). In this paper, we extend this approach to multi-dimensional nonlinear wave equations (NLW) with fractional derivative perturbations (unbounded ones), where we improve the separation lemma for $|(n\lambda)^{2}-|m|^{2}|$ to handle unbounded perturbations, and explore the persistence of periodic solutions.

We consider the multi-dimensional Nonlinear Wave equations (NLW)
\begin{equation}
    y_{tt}-\Delta y+ \rho y+D^{\alpha} (y^{3})=0,
    \label{nlw}
\end{equation}
under the periodic boundary condition (i.e., $x\in\mathbb{T}^{d}$).
The fractional derivative $D^{\alpha}$ is defined as follows:

Let  $f\in L^{2}(\mathbb{T}^{d})$ be a smooth periodic function on the torus $\mathbb{T}^{d}:=(\mathbb{R}/2\pi \mathbb{Z})^{d}$.  We define
\begin{equation}
    D^{\alpha} f(x)=\sum\limits_{m\in \mathbb{Z}^{d}} \langle m\rangle^{\alpha}\hat{f}(m)e^{im\cdot x},
\end{equation}
where $\hat{f}(m)$ is the $m$-th Fourier coefficient of $f$,  
and  $\langle m\rangle=\sqrt{\sum\limits_{1\le j\le d} |m_{j}|^{2}+1}$. Note that we define the norm of $f\in L^{2}(\mathbb{T}^{d})$ as  $\lVert f\rVert=\frac{1}{(2\pi)^{d}}\int_{\mathbb{T}^{d}} |f(x)|^{2}dx$. Thus, $\{e^{im\cdot x}\}_{m\in \mathbb{Z}^{d}}$ is a standard orthonormal basis of $L^{2}(\mathbb{T}^{d})$, and 
\[\hat{f}(m)=\frac{1}{(2\pi)^{d}}\int_{\mathbb{T}^{d}} f(x)e^{-i m\cdot x}dx.\]

Replacing $y$ by $\varepsilon y$, equation (\ref{nlw}) becomes 
\begin{equation}
     y_{tt}-\Delta y+ \rho y+\varepsilon^{2} D^{\alpha} (y^{3})=0,
     \label{nlwper}
\end{equation}
which appears as a perturbation of the linear wave equation
\begin{equation}
    y_{tt}-\Delta y+ \rho y=0.
    \label{lw}
\end{equation}
It is straightforward that
\begin{equation}
    y=p \cos ( m\cdot x +\lambda t) \textup{ or } y=p \sin ( m\cdot x +\lambda t)
\end{equation}
with $\lambda=(|m|^{2}+\rho)^{1/2}$ are 
special solutions of (\ref{lw}). Fix $m_{0}\in\mathbb{Z}^{d}\setminus\{0\}$. Our aim is to show the persistence of the solution 
\begin{align}
    y_{0}=p_{0} \cos( m_{0}\cdot x +\lambda_{0}t),\\
    \lambda_{0}=(|m_{0}|^{2}+\rho)^{1/2},
\end{align}
of (\ref{lw}) for the perturbed equation (\ref{nlwper}). More precisely, we aim to find a  solution of (\ref{nlwper})  expressed in the form
\begin{equation}
    y(x,t)=\sum\limits_{m\in \mathbb{Z}^{d}, n\in\mathbb{Z}}\hat{y}(m,n)\cos ( m\cdot x   +n\lambda t ),
    \label{soltofind}
\end{equation}
where
\begin{align}
    &\hat{y}(m_{0}, 1)=\hat{y}(-m_{0},-1)=\frac{1}{2}p_{0},\\
    &\hat{y}(m,n)=\hat{y}(-m,-n),\\
    &\sum\limits_{\substack{(m,n)\ne(m_{0},1),\\(m,n)\ne (-m_{0},-1)}}|\hat{y}(m,n)|e^{|(m,n)|_{1}^{c}}<C\varepsilon^{\frac{1}{8}},\\
    &|\lambda-\lambda_{0}|<C\varepsilon.\label{frequency}
\end{align}
Here, $|(m,n)|_{1}=\sum\limits_{k=1}^{d}|m_{k}|+|n|$, $C$ only depends on $d$, $\alpha$, $m_{0}$. The constant $c$ is  sufficiently small.
We regard $p_{0}\in [1,2]$ as the parameter. 
We have the following theorem:
\begin{theorem}
Fix $m_{0}$ and suppose $\rho$ satisfies
\begin{align}
    &|n\rho-k|>\gamma |n|^{-2d}, \textup{ for } n,k\in\mathbb{Z},  n\ne 0, \\
     &\left| \sum\limits_{k=0}^{10d} a_{k}(\sqrt{\rho+|m_{0}|^{2}})^{k} \right|>\gamma\left(\sum |a_{k}|\right)^{-\tilde{C}}, \textup{ for all } \{a_{k}\}\in\mathbb{Z}^{10d+1}\setminus\{0\},\label{NONRE2}
\end{align}
where $\gamma$ is a small constant and $\tilde{C}$ is a large number depending on $d$.

Let $\alpha$  and $c$ be sufficient small. 
If $\varepsilon$ is sufficiently small, then there is a Cantor type set $I_{\varepsilon}\in[1,2]$. For $p_{0}\in I_{\varepsilon}$, the periodic solution 
\begin{equation}
    y_{0}=p_{0} \cos( m_{0}\cdot x +\lambda_{0}t)
\end{equation}
persists. The persisted periodic solution is of Gevrey smoothness and with frequency
\begin{equation}
    \lambda^{2}=|m_{0}|+\rho+\frac{3}{4}\langle m_{0} \rangle^{\alpha} p_{0}^{2}\varepsilon^{2}+O(\varepsilon^{\frac{17}{8}}).
\end{equation}
Moreover, we have
\begin{equation}
\textup{mes }([1, 2]\setminus I_{\varepsilon})\to 0, \textup{ as } \varepsilon\to 0.
\end{equation}
    \label{maintheorem}
\end{theorem}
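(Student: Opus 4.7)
The plan is to use the Craig--Wayne--Bourgain approach that the authors developed in their companion NLS work, adapted to the wave operator and modified to accommodate the unbounded factor $D^{\alpha}$. First I plug the ansatz (\ref{soltofind}) into (\ref{nlwper}) and match Fourier coefficients. Writing $u(m,n)=\hat{y}(m,n)$, the problem becomes a nonlinear lattice equation
\[
F(u,\lambda):=D(\lambda)\,u+\varepsilon^{2}N^{\alpha}(u)=0,
\]
where $D(\lambda)$ is diagonal with symbol $(n\lambda)^{2}-|m|^{2}-\rho$ and $N^{\alpha}(u)$ is the cubic convolution $u\ast u\ast u$ multiplied by the Fourier multiplier $\langle m\rangle^{\alpha}$. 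I perform a Lyapunov--Schmidt decomposition around the unperturbed solution: the $P$-equation lives on the four resonant sites $\{(\pm m_{0},\pm 1)\}$ and, together with the normalization $\hat{y}(\pm m_{0},\pm 1)=p_{0}/2$, determines $\lambda$ implicitly as a function of $p_{0}$, producing the expansion $\lambda^{2}=|m_{0}|^{2}+\rho+\tfrac{3}{4}\langle m_{0}\rangle^{\alpha}p_{0}^{2}\varepsilon^{2}+O(\varepsilon^{17/8})$. The $Q$-equation on the remaining sites is solved by a Nash--Moser/Newton iteration on an increasing sequence of Fourier boxes $B_{N_{s}}$.

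At step $s$ of the iteration one needs to invert the linearized operator
\[
T_{N_{s}}(\lambda,p_{0}):=D(\lambda)+3\varepsilon^{2}\langle m\rangle^{\alpha}\,\widehat{y_{s}^{\,2}}\,\ast,
\]
restricted to $B_{N_{s}}$ minus the resonant sites, with a controlled polynomial loss in $N_{s}$. This is handled, as usual in the CWB framework, by multiscale Green's function analysis. The main novelty, and what I expect to be the central obstacle, is that the unbounded multiplier $\langle m\rangle^{\alpha}$ amplifies the off-diagonal part of $T_{N_{s}}$, so the classical separation lemma bounding the diameter of singular clusters of sites $(m,n)$ with $|(n\lambda)^{2}-|m|^{2}-\rho|<N_{s}^{-\tau}$ is not sharp enough to close the resolvent identity. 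Following the announcement in the introduction, I would strengthen this separation lemma by combining the Diophantine property (\ref{NONRE2}) of $\sqrt{\rho+|m_{0}|^{2}}$ with the arithmetic of the quadratic form $(n\lambda)^{2}-|m|^{2}$, showing that singular sites at scale $N$ are organized in clusters of diameter $O((\log N)^{C})$ well separated from one another, with a gain strong enough to absorb a factor $N^{\alpha}$. Once this is in place, standard resolvent expansions and subharmonic/semialgebraic arguments yield exponential off-diagonal decay of $T_{N_{s}}^{-1}$ for $p_{0}$ outside an excised set $\mathcal{E}_{s}\subset[1,2]$ of measure $O(N_{s}^{-\sigma})$.

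With the linear estimates in hand, the Newton scheme converges in the Gevrey-type norm $\sum|\hat{y}(m,n)|e^{c|(m,n)|_{1}}$ for $c$ small, giving the $C\varepsilon^{1/8}$ bound on the non-resonant part and Gevrey smoothness of the solution. Choosing the scales $N_{s}$ super-geometrically (e.g.\ $N_{s+1}=N_{s}^{3/2}$) keeps the total measure of $\bigcup_{s}\mathcal{E}_{s}$ tending to $0$ as $\varepsilon\to 0$, and setting $I_{\varepsilon}:=[1,2]\setminus\bigcup_{s}\mathcal{E}_{s}$ produces the Cantor-type parameter set. The smoothness of $\lambda(p_{0})$ and the announced frequency expansion follow from differentiating the $P$-equation in Whitney sense on $I_{\varepsilon}$. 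I expect the improved separation lemma to be by far the hardest ingredient: it is precisely where the unbounded nature of $D^{\alpha}$ is felt, and it dictates the smallness condition on $\alpha$ stated in the theorem.
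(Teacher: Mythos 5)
Your high-level plan — lattice reformulation, Lyapunov--Schmidt split, Nash--Moser on growing Fourier boxes with multiscale Green's-function analysis, and an upgraded separation lemma to absorb the $\langle m\rangle^{\alpha}$ loss — is the right template, and you correctly identify the separation lemma as the load-bearing novelty. However, there is a genuine structural gap in your decomposition: you place the bifurcation equation on only ``the four resonant sites $\{(\pm m_{0},\pm 1)\}$.'' In dimension $d\ge 2$ the linear operator $-(n\lambda_{0})^{2}+|m|^{2}+\rho$ vanishes on the entire sphere: the resonant set is $S=\{(m,n):|m|=|m_{0}|,\ n=\pm 1\}$, which contains $2b$ sites with $b=\#\{m\in\mathbb{Z}^{d}:|m|=|m_{0}|\}$, and $b$ can be large. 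If you excise only the four sites $(\pm m_{0},\pm 1)$ and try to invert the linearized operator on the rest, you hit zero divisors at the remaining sphere points and the iteration cannot close. The paper therefore introduces additional amplitudes $p_{m}$ for all $|m|=|m_{0}|$, $m\ne m_{0}$, solves the non-resonant block as a function of $(\bm{p},\lambda)$ first, and only then uses the full system of $b$ resonant equations to determine $(\bm{p}(p_{0}),\lambda(p_{0}))$, showing $p_{m}=O(\varepsilon^{1/8})$ for $m\ne m_{0}$. Relatedly, your logical order is reversed: you cannot determine $\lambda$ from the resonant equations before solving the non-resonant part, since those equations involve the full $u$; the range equation must come first.

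Two smaller divergences from the paper worth noting. First, your terminology is swapped relative to the paper (the paper's $P$-equations live on $\mathbb{Z}^{d+1}\setminus S$ and its $Q$-equations on $S$), which is harmless but worth aligning. Second, for the measure estimate you invoke ``subharmonic/semialgebraic arguments,'' whereas the paper symmetrizes the linearized operator by dividing out $\langle m\rangle^{\alpha}$ to obtain a self-adjoint $\tilde{T}$ and then uses first-order eigenvalue variation in $\lambda$ (hence in $p_{0}$ via $\partial\lambda/\partial p_{0}\sim\varepsilon^{2}$), exploiting that singular clusters away from the origin force $|n|$ to be large so that $|\partial E/\partial\lambda|\gtrsim N^{1/6-\alpha}$. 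This eigenvalue-variation route is the natural one for a single time frequency and a one-dimensional parameter; the semialgebraic machinery you gesture at is not needed here and would be considerably heavier. Finally, the separation lemma the paper proves yields singular clusters of diameter $N^{O(\alpha)}$ (polynomially small, controlled by choosing $\alpha$ small) rather than the $O((\log N)^{C})$ you optimistically posit; the polynomial bound suffices but it is a different and weaker statement than what you describe.
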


\begin{remark}
    The order $\alpha$ in this paper is chosen to be very small and decreases rapidly as the spatial dimension $d$ increases. Therefore, we do not pursue the precise restriction of $\alpha$, as the focus of this paper is on the validity and robustness of the method rather than on fine-tuning the oeder $\alpha$.
\end{remark}

\textbf{Notations.} $\mathbb{Z}_{+}=\{1,2,...\}$.

$a\lesssim b$ refers to $a\le Cb$ for some constant depending on $m_{0}$, $d$, $\alpha$.

For $x\in\mathbb{R}^{b}$, denote $|x|=\sqrt{\sum\limits_{j=1}^{b}|x_{j}|^{2}}$ and $|x|_{1}=\sum\limits_{j=1}^{b}|x_{j}|$.

\textbf{Description of the paper.} The paper is organized as follows: In Section 2, we present the lattice formulation of the problem and introduce the Lyapounov-Schmidt decomposition. In Section 3, we focus on the analytic part of solving the $P$-equations without measure estimate. Section 4 is dedicated to solving the $Q$-equations and estimating the measure. We first solve the $Q$-equations and  provide  measure estimate in finite steps. Then, we give an improved separation lemma for $|(n\lambda)^{2}-|m|^{2}|$, followed by the measure estimate for infinite steps.  Finally, the appendices contain facts about some arithmetical conditions, the proof of the  improved separation lemma and the coupling lemmas.

\section{Lattice Formulation and Lyapounov-Schmidt decomposition}
\begin{definition}
    Let $y(x, t)$ be  $2\pi$ periodic in $x$ and $\frac{2\pi}{\lambda}$ periodic in $t$. We say $u(x, \theta)\in L^{2}(\mathbb{T}^{d+1})$ is the hull of $y(x,t)$, if we have
    \begin{equation}
        y(x,t)=u(x,\lambda t).
    \end{equation}
\end{definition}
We denote by $L^{2}_{e}$ the set of functions which are square integrable and even.
Suppose $u(x,\theta)\in L^{2}_{e}(\mathbb{T}^{d+1})$. Then we have
\begin{equation}
    \hat{u}(m,n)=\hat{u}(-m,-n),
    \label{ouhanshuxingshi}
\end{equation}
where $\hat{u}(m,n)=\frac{1}{(2\pi)^{2d+1}}\int_{\mathbb{T}^{d+1}}f(x,\theta)e^{-i(m\cdot x+n\theta)}$ refers to the Fourier coefficients.
To solve a periodic solution with frequency $\lambda$ of (\ref{nlwper})  is equivalent to solve
\begin{equation}
    (\lambda \partial_{\theta})^{2}u-\Delta u+\rho u+ \varepsilon^{2} D^{\alpha}(u^{3})=0.
    \label{hullformal}
\end{equation}
Denote the left side of the equation (\ref{hullformal}) by $F_{\lambda}(u)$. 
Note that if $u(x,\theta)$ is even, then $D^{\alpha}(u^{3})$ is also  even.
Passing to the Fourier coefficients, we have
\begin{equation}
    (-(n\lambda)^{2}+\mu_{m}^{2})\hat{u}(m,n)+\varepsilon^{2}\langle m \rangle^{\alpha}(u^{3})^{\land}(m,n)=0, \textup{ for } m\in \mathbb{Z}^{d},\  n\in\mathbb{Z},
    \label{quanbufangcheng}
\end{equation}
where 
\begin{equation}
    \mu_{m}=(|m|^{2}+\rho)^{1/2}.
\end{equation}
We denote the resonant set by $S$. Note that we want to find a solution with frequency $\lambda$ satisfying (\ref{frequency}).  Thus, we have
\begin{equation}
    S=\{(m,n)\in\mathbb{Z}^{d+1}: -(n\lambda_{0})^{2}+\mu_{m}^{2}=(-n^{2}|m_{0}|^{2}+|m|^{2})+(-n^{2}+1)\rho=0\}.
\end{equation}
Since $\rho$ satisfies a general Diophantine condition, we have
\begin{equation}
    S=\{(m,n):m\in\mathbb{Z}^{d},\ |m|=|m_{0}|,\  n=\pm 1\}.
\end{equation}
By restricting (\ref{quanbufangcheng}) to $S$ and $\mathbb{Z}^{d+1}\setminus S$, we obtain 
\begin{align}
    &Q\textup{-equations:} \    (-(n\lambda)^{2}+\mu_{m}^{2})\hat{u}(m,n)+\varepsilon^{2}\langle m \rangle^{\alpha}(u^{3})^{\land}(m,n)=0,\  (m,n)\in S,\label{Qeq}\\
    &P\textup{-equations:}\  (-(n\lambda)^{2}+\mu_{m}^{2})\hat{u}(m,n)+\varepsilon^{2}\langle m \rangle^{\alpha}(u^{3})^{\land}(m,n),\ \    (m,n)\in \mathbb{Z}^{d+1}\setminus S. \label{Peq}
\end{align}
The strategy is as follows. Fix $\lambda$, $\hat{u}(m_{0},1)=\hat{u}(-m_{0},-1)=\frac{1}{2}p_{0}$, $\hat{u}(m,1)=\hat{u}(-m,-1)=\frac{1}{2}p_{m}\ (|m|=|m_{0}|, \ m\ne m_{0})$. Using the $P$-equations, we solve $\hat{u}|_{\mathbb{Z}^{d+1}\setminus S}$ which depends on $$\lambda,\  p_{0},\  (p_{m})_{|m|=|m_{0}|, m\ne m_{0}}.$$
This will require restricting $(\lambda,  (p_{m})_{|m|=|m_{0}|})$ to a Cantor type set, constructed along the lines of the Newton scheme. The function $\hat{u}|_{\mathbb{Z}^{d+1}\setminus S}$ will be smoothly defined on the entire $(\lambda,  (p_{m})_{|m|=|m_{0}|})$ parameter set (see Section 3 for details of extension). Moreover, we would have $\hat{u}(m,n)=\hat{u}(-m,-n)$ for $(m,n)\in\mathbb{Z}^{d+1}$.  Next, substitute $\hat{u}|_{\mathbb{Z}^{d+1}\setminus S}$ in $Q$-equations
\begin{equation}
    (-\lambda^{2}+|m_{0}|^{2}+\rho)p_{m}+\varepsilon^{2}\langle m \rangle^{\alpha}(u^{3})^{\land}(m,1)=0,\  |m|=|m_{0}|,
\end{equation}
leading to equations in $(p_{m})_{|m|=|m_{0}|}$ and $\lambda$. Parameterizing in $p_{0}$, one obtains $p_{m}(p_{0}), \lambda(p_{0})$. Restrict  $p_{0}$ such that $(\lambda(p_{0}), p_{0}, (p_{m})_{|m|=|m_{0}|})$ is contained in the Cantor type set mentioned previously. For those $p_{0}$, we obtained the persisted solution.

\section{Solving the P-equations: the analytic part}
In this section, we construct approximate solutions $v_{j}$ for $P$-equations step by step.

Choose and fix constants $M ,c, C_{1}, C_{2}$ such that
\[
M>100 \langle m_{0}\rangle,\ \  0<c<\frac{\log \frac{17}{16}}{\log M},\ \  C_{1}>C_{2}>2.
\]
To describe the linearization of the $P$-equations, we introduce the notations as follows:
\begin{definition}
    Let $f\in L^{2}(\mathbb{T}^{d+1})$. We define the projective operators $\Gamma_{P}, \Gamma_{Q}$ as follows:
    \begin{align}
        &\Gamma_{P} f(x,\theta)=\sum\limits_{\substack{(m,n)\in\mathbb{Z}^{d+1}\\(m,n)\notin S}} \hat{f}(m,n)e^{i(m\cdot x+n\theta)},\\
        &\Gamma_{Q} f(x,\theta)=\sum\limits_{(m,n)\in S} \hat{f}(m,n)e^{i(m\cdot x+n\theta)}.
    \end{align}
\end{definition}
Moreover, let $N$ be an  integer. We define $\Gamma_{N}$ as follows:
\begin{equation}
    \Gamma_{N} f(x,\theta)=\sum\limits_{\substack{(m,n)\in\mathbb{Z}^{d+1}\\(m,n)\notin S\\|(m,n)|_{1}<N}} \hat{f}(m,n)e^{i(m\cdot x+n\theta)}.
\end{equation}
Let $u_{0}(x,\theta)=p_{0}\cos( m_{0}\cdot x  +\theta)+\sum\limits_{\substack{|m|=|m_{0}|\\ m\ne m_{0}}} p_{m} \cos( m\cdot x+\theta)$, where $p_{0}\in [1,2]$, $p_{m}\in [-1,1]$ for $|m|=|m_{0}|$, $m\ne m_{0}$.
Denote $\bm{p}=(p_{m})_{|m|=|m_{0}|}$.
For $\lambda\in [\lambda_{0}-1,\lambda_{0}+1]$, we define
\begin{equation}
    G_{\bm{p},\lambda}(v)=\Gamma_{P}F_{\lambda}(u_{0}+v),
\end{equation}
for $v\in \Gamma_{P}L^{2}(\mathbb{T}^{d+1})$. Thus, $G$ defines a map from $\Gamma_{P}L^{2}(\mathbb{T}^{d+1})$ to $\Gamma_{P}L^{2}(\mathbb{T}^{d+1})$. Moreover, it is straight forward to  check that if $v\in \Gamma_{P}L_{e}^{2}(\mathbb{T}^{d+1})$, then $G(v)\in\Gamma_{P}L_{e}^{2}(\mathbb{T}^{d+1})$. We aim to solve 
\begin{equation}
    G_{\bm{p}, \lambda}(v)=0.
    \label{qequationfanhan}
\end{equation}
 We call $v$ an $O(\delta)$-approximate solution to (\ref{qequationfanhan}) if we have
 \[
    \lVert G_{\bm{p},\lambda}(v) \rVert\le \delta.
 \]
Denote $I_{0}=[1,2]\times[-1,1]^{b-1}\times[\lambda-1,\lambda+1]$, where $b$ refers to the number of $m\in \mathbb{Z}^{d}$ such that $|m|=|m_{0}|$. For $(\bm{p},\lambda)\in I_{0}$,
we have
\begin{align}
    \lVert G_{\bm{p},\lambda}(0) \rVert&=\lVert \Gamma_{P} F_{\lambda}(u_{0}) \rVert \notag\\
    &=\lVert \varepsilon^{2} D^{\alpha}(u_{0}^{3}) \rVert\lesssim \varepsilon^{2}.
\end{align}
 For  $j_{0}=\frac{\log(\frac{1}{2}\log\frac{1}{\varepsilon})}{\log M^{c}}$, $0$ is an $O(e^{-2(M^{j_{0}})^{c}})$-approximate solution to (\ref{qequationfanhan}). Denote $v_{j_{0}}=0$, $u_{j_{0}}=u_{0}+v_{j_{0}}$ and $\Lambda_{j_{0}}=\{I_{0}\}$. 
Note that we have
\begin{itemize}
    \item $u_{j_0}\in L^{2}_{e}(\mathbb{T}^{d+1})$ and $u_{j_{0}}$ is defined smoothly for $(\bm{p},\lambda)\in I_{0}$. Moreover, $\hat{u}_{j_{0}}(m,n)=\hat{u}_{j_{0}}(-m,-n)\in\mathbb{R}$ for $(m,n)\in \mathbb{Z}^{d}$.
    \item $\textup{supp }\hat{u}_{j_{0}}\subset B(0,M^{j_{0}})$.
    \item $|\partial^{\beta} \hat{u}_{j_{0}}(\xi)|<C_{0} e^{-|\xi|_{1}^{c}}$ for $\xi\in\mathbb{Z}^{d+1}$ and $\beta=0,1$, where $\partial$ refers to the derivative to $\lambda$ or $\bm{p}$, and $C_{0}$ is a constant depending only on $m_{0}$, $d$ and $c$.
    \item For $(\bm{p},\lambda)\in I\in\Lambda_{j_{0}}$, we  have
    \begin{equation}
        \lVert \partial^{\beta} G_{\bm{p},\lambda}(v_{j_{0}}) \rVert=\lVert  \partial^{\beta} \Gamma_{P}F_{\lambda}(u_{j_{0}}) \rVert<e^{-2 (M^{j_{0}})^{c}},
    \end{equation}
    for $\beta=0, 1$.
\end{itemize}
 
Let $w\in \Gamma_{P} L^{2}(\mathbb{T}^{d})$, we have
\begin{align}
    G_{\bm{p},\lambda}(v_{j_{0}}+w)
    =&\Gamma_{P} F_{\lambda}(u_{j_{0}}+w)\notag\\
    =&\Gamma_{P}\left\{ (\lambda\partial_{\theta})^{2}(u_{j_{0}}+w)-\Delta(u_{j_{0}}+w)+\rho(u_{j_{0}}+w)+\varepsilon^{2} D^{\alpha}((u_{j_{0}}+w)^{3}) \right\} \notag\\
    =&\Gamma_{P}\{F_{\lambda}(u_{j_{0}})+((\lambda\partial_{\theta})^{2}-\Delta+\rho)w+\varepsilon^{2} D^{\alpha}(3u_{j_{0}}^{2}w)+\varepsilon^{2} D^{\alpha}(3uw^{2}+w^{3})\}\notag\\
    =&G_{\bm{p},\lambda}(v_{j_{0}})+\Gamma_{P}((\lambda\partial_{\theta})^{2}-\Delta+\rho)\Gamma_{P}w+\varepsilon^{2}\Gamma_{P}D^{\alpha}(3u_{j_{0}}^{2}\Gamma_{P}w)\notag\\
    &+\varepsilon^{2} \Gamma_{P}D^{\alpha}(3uw^{2}+w^{3}).\label{taylor}
\end{align}
We obtain the linearized equation 
\begin{equation}
    \Gamma_{P}((\lambda\partial_{\theta})^{2}-\Delta+\rho)\Gamma_{P}w+\varepsilon^{2} D^{\alpha}(3u_{j_{0}}^{2}\Gamma_{P}w)=-G_{\bm{p},\lambda}(v_{j_{0}}).
\end{equation}
Passing to the Fourier coefficients, we have
\begin{equation}
    R_{P}(D+\varepsilon^{2} \Lambda S_{j_{0}})R_{P}\hat{w}=-\widehat{G}_{\bm{p},\lambda}(v_{j_{0}}),
\end{equation}
where $R_{P}$ refers to coordinates restriction to $\mathbb{Z}^{d+1}\setminus S$,
\begin{equation}
    D=\textup{diag} (-(n\lambda)^{2}+\mu_{m}^{2}:(m,n)\in\mathbb{Z}^{d+1}),
\end{equation}
\begin{equation}
    \Lambda=\textup{diag}(n\cdot 0+\langle m \rangle^{\alpha}),
\end{equation}
and 
\begin{equation}
    S_{j_{0}}=S_{3u_{j_{0}}^{2}}.
\end{equation}
Here, $S_{\phi}$ represents the Toeplitz operator corresponding to $\phi$ (i.e., $S_{\phi}((m,n),(m',n'))=\hat{\phi}(m-m',n-n')$).
Denote $T_{j_{0}}=R_{P}(D+\varepsilon^{2} \Lambda S_{j_{0}})R_{P}$ and $T_{j_{0}, N}=T_{j_{0}}|_{|(m,n)|_{1}<N}$. Note that $T_{j_{0}}$ is not self-adjoint, while self-adjoint property is important in the analysis of the measure. Thus, we introduce the following matrix:

Let 
\begin{equation}
    \tilde{D}=\textup{diag}\left(\frac{-(n\lambda)^{2}+\mu_{m}^{2}}{n\cdot 0+\langle m\rangle^{\alpha}}:(m,n)\in\mathbb{Z}^{d+1}\right).
\end{equation}
Denote $\tilde{T}_{j_{0}}=R_{P}(\tilde{D}+ \varepsilon^{2} S_{j_{0}})R_{P}$ and $\tilde{T}_{j_{0},N}=\tilde{T}_{j_{0}}|_{|(m,n)|_{1}<N}$. Note that $\tilde{T}_{j_{0}}$ is self-adjoint and depends on $(\bm{p},\lambda)$.

Next, we construct $u_{j_{0}+1}, v_{j_{0}+1}$ and $\Lambda_{j_{0}+1}$.

Let $N=M^{j_{0+1}}$. Mesh  $\mathop{\cup}\limits_{I\in\Lambda_{j_{0}}} I$ into intervals $I'$ of size $\exp{(-(\log N)^{C_{1}})}$.
Suppose $(\bm{p}, \lambda)$ satisfies 
\begin{align}
    &\lVert \tilde{T}_{j_{0}, N}^{-1} \rVert < \exp(\log N)^{C_{2}} \label{l2bound2},\\
    &|\tilde{T}_{j_{0}, N}^{-1}(\xi,\xi')|< e^{-\frac{1}{2}|\xi-\xi'|_{1}^{c}} \textup{ for } |\xi-\xi'|_{1}>N^{\frac{1}{2}} \label{offdiagonal2}.
\end{align}
By perturbation argument, we may ensure that for $(\bm{p}', \lambda')$ in the neighborhood of $(\bm{p},\lambda)$ of size $3\sqrt{b+2}\exp{(-(\log N)^{C_{1}})}$, we have
\begin{align}
    &\lVert \tilde{T}_{j_{0}, N}^{-1} \rVert <2\exp(\log N)^{C_{2}} \label{l2bound2'},\\
    &|\tilde{T}_{j_{0}, N}^{-1}(\xi,\xi')|<2e^{-\frac{1}{2}|\xi-\xi'|_{1}^{c}} \textup{ for } |\xi-\xi'|_{1}>N^{\frac{1}{2}} \label{offdiagonal2'},
\end{align}
since $C_{1}>C_{2}$.

If $I'$ contains a point $(\bm{p},\lambda)$ such that (\ref{l2bound2}) and (\ref{offdiagonal2}) holding, then we collect these $I'$ together and denote this collection by $\Lambda_{j_{0+1}}$. 

For a set $A\subset \mathbb{R}^{b+1}$, we denote by $B(A,\delta)$ the $\delta$-neighborhood of $A$.
By the above argument, we obtain that for $(\bm{p}, \lambda)\in \mathop{\cup}\limits_{I\in\Lambda_{j_{0}+1}} B(I,\sqrt{b+2}\exp{(-(\log N)^{C_{1}})})$, we have (\ref{l2bound2'}) and (\ref{offdiagonal2'})  which imply
\begin{align}
    &\lVert T_{j_{0}, N}^{-1} \rVert <2\exp(\log N)^{C_{2}} \label{l2bound2''},\\
    &|T_{j_{0}, N}^{-1}(\xi,\xi')|<2e^{-\frac{1}{2}|\xi-\xi'|_{1}^{c}} \textup{ for } |\xi-\xi'|_{1}>N^{\frac{1}{2}} \label{offdiagonal2''}.
\end{align}

By the definition of $S_{j_{0}}$, we have 
\begin{align}
   |\partial^{\beta} S_{j_{0}}(\xi,\xi')|<C_{0}'e^{-(1-)|\xi-\xi'|_{1}^{c}},
\end{align}
for $\beta=0,1$ and $\xi,\xi'\in\mathbb{Z}^{d+1}$. Here, $C_{0}'$ only depends on $C_{0}$ and the decayed rate (i.e., the `$-$' of $1-$). 
Thus, we have
\begin{equation}
    |\partial^{\beta}T_{j_{0}}(\xi,\xi')|<\varepsilon^{\frac{9}{5}} \langle m \rangle^{\alpha} e^{-(1-)|\xi-\xi'|_{1}^{c}}, \textup{ for } \xi\ne\xi',  \xi=(m,n),
    \label{2627}
\end{equation}
where $\beta=0, 1$. 

From now on, we consider $(\bm{p}, \lambda)\in \mathop{\cup}\limits_{I\in\Lambda_{j_{0}+1}} B(I,\sqrt{b+2}\exp{(-(\log N)^{C_{1}})})$.
Since one has (\ref{l2bound2''}) and
\begin{equation}
    \partial T_{j_{0},N}^{-1}=-T_{j_{0}, N}^{-1}(\partial T_{j_{0}, N})T_{j_{0}, N}^{-1},
\end{equation}
it follows that 
\begin{align}
    \lVert \partial T_{j_{0}, N}^{-1} \rVert\le& \lVert T_{j_{0}, N}^{-1} \rVert^{2} \lVert \partial T_{j_{0},N} \rVert\notag\\
    \le & e^{3(\log N)^{C_{2}}}.
\end{align}
When $|\xi-\xi'|_{1}>N^{3/4}$, we have
\begin{align}
    |(\partial T_{j_{0}, N}^{-1})(\xi,\xi')|&\le \sum\limits_{|\xi_{1}|,|\xi_{2}|<N}\left| T_{j_{0}, N}^{-1}(\xi,\xi_{1})  \right| \left| \partial T_{j_{0}, N}(\xi_{1},\xi_{2}) \right| \left| T_{j_{0}, N}^{-1}(\xi_{2}, \xi') \right|\notag\\
        &\le N^{2(d+1)+3} e^{5(\log N)^{C_{2}}} e^{-\frac{1}{2}(|\xi-\xi'|_{1}-3 N^{1/2})^{c}}\notag\\
        &\le e^{-(\frac{1}{2}-)|\xi-\xi'|_{1}^{c}}.\label{32w}
\end{align}
Since $u_{j_{0}}\in L^{2}_{e}(\mathbb{T}^{d+1})$ and  $\textup{supp } \hat{u}_{j_{0}}\subset B(0, M^{j_{0}})$, we have
$$G_{\bm{p}, \lambda}(v_{j_{0}})\in L^{2}_{e}(\mathbb{T}^{d+1}),$$ and $$\textup{supp }\hat{G}_{\bm{p}, \lambda}(v_{j_{0}})\subset B(0, 10 M^{j_{0}}).$$
Let 
\begin{equation}
    \hat{w}_{j_{0}}=-T_{j_{0}, N}^{-1}\widehat{G}_{\bm{p},\lambda}(v_{j_{0}}),
    \label{bijinbu}
\end{equation}
and 
\begin{equation}
    v_{j_{0}+1}=v_{j_{0}}+w_{j_{0}}, \ \ u_{j_{0}+1}=u_{0}+v_{j_{0}+1}.
\end{equation}
It is straightforward that $\hat{w}_{j_{0}}(\xi)=\hat{w}_{j_{0}}(-\xi)\in\mathbb{R}$ and $\textup{supp } \hat{w}_{j_{0}}\subset B(0,\frac{1}{4}M^{j_{0}+1})$. By the definition (\ref{bijinbu}), we have
\begin{align}
    \lVert w_{j_{0}} \rVert\le& \lVert T_{j_{0}, N}^{-1} \rVert \lVert G_{\bm{p}, \lambda}(v_{j_{0}}) \rVert\notag\\
    \le & e^{(\log N)^{C_{2}}} e^{-2 (M^{j_{0}})^{c}}\notag\\
    \le & e^{-(2-)(M^{j_{0}})^{c}}<e^{\frac{3}{2} (M^{j_{0}+1})^{c}},
    \label{36w}
\end{align}
provided that $c<\frac{\log \frac{4}{3}}{\log M}$. Moreover, we have
\begin{align}
    \lVert \partial w_{j_{0}} \rVert\le&\lVert \partial T_{j_{0}, N}^{-1} \rVert \lVert G_{\bm{p}, \lambda}(v_{j_{0}}) \rVert+\lVert  T_{j_{0}, N}^{-1} \rVert \lVert \partial G_{\bm{p}, \lambda}(v_{j_{0}}) \rVert\notag\\
    \le & e^{3(\log N)^{C_{2}}} e^{-2(M^{j_{0}})^{c}}\notag\\
    <& e^{-(2-)(M^{j_{0}})^{c}}<e^{-\frac{3}{2}(M^{j_{0}+1})^{c}}.\label{37w}
\end{align}

By (\ref{taylor}), we have
\begin{align}
     G_{\bm{p},\lambda}(v_{j_{0}+1})=& \left((T_{j_{0}}-T_{j_{0},N})\hat{w}_{j_{0}}\right)^{\vee}+\varepsilon^{2} \Gamma_{P} D^{\alpha}(3u_{j_{0}}w_{j_{0}}^{2}+w_{j_{0}}^{3}).
\end{align}
For the second term, we have
\begin{equation}
    \lVert \partial^{\beta} \varepsilon^{2} \Gamma_{P} D^{\alpha}(3u_{j_{0}}w_{j_{0}}^{2}+w_{j_{0}}^{3}) \rVert<e^{-(3-)(M^{j_{0}+1})^{c}},
\end{equation}
for $\beta=0,1$.
Now we consider the first term. Denote $P_{K}$ the projection of the Fourier coefficients to $B(0, K)$. For simplicity, we omit the subscript of $T_{j_{0}, N}$. We have
\begin{align}
    (T-T_{N})\hat{w}_{j}
    &=(I-P_{N})T P_{\frac{N}{2}}\hat{w}_{j} +(T-T_{N})(\hat{w}_{j}-P_{\frac{N}{2}}\hat{w}_{j})\notag\\
    &=(I-P_{N})T P_{\frac{N}{2}}\hat{w}_{j}-(T-T_{N})(I-P_{\frac{N}{2}})T_{N}^{-1} \widehat{G}_{\bm{p},\lambda}(v_{j_{0}})\notag\\
    &=(I-P_{N})T P_{\frac{N}{2}}\hat{w}_{j}-(T-T_{N})(I-P_{\frac{N}{2}})T_{N}^{-1} P_{\frac{N}{4}}\widehat{G}_{\bm{p},\lambda}(v_{j_{0}}).
\end{align}

By (\ref{2627}), (\ref{36w}) and (\ref{37w}), we obtain
\begin{align}
    \lVert \partial^{\beta} (I-P_{N})T P_{\frac{N}{2}}\hat{w}_{j}  \rVert
    <& N^{\alpha} e^{-(1-)(\frac{1}{2}N)^{c}} e^{-\frac{3}{2}(M^{j_{0}+1})^{c}}\notag \\
    <&\frac{1}{3} e^{-\frac{1}{2} N^{c}} e^{-\frac{3}{2} (M^{j_{0}+1})^{c}}=\frac{1}{3}e^{-2 (M^{j_{0}+1})^{c}},
\end{align}
for $\beta=0,1$.
By (\ref{offdiagonal2''}), (\ref{32w}) and the estimate of $\partial^{\beta}G_{p,\lambda}(v_{j_{0}})$, we get
\begin{align}
    \lVert \partial^{\beta} (T-T_{N})(I-P_{\frac{N}{2}})T_{N}^{-1} P_{\frac{N}{4}}\widehat{G}_{\bm{p},\lambda}(v_{j_{0}}) \rVert
    &< e^{-(\frac{1}{2}-)(\frac{1}{4} N)^{c}} e^{-2(M^{j_{0}})^{c}}\notag\\
        &\le \frac{1}{3}e^{-\frac{17}{8} (M^{j_{0}})^{c}}\notag\\
        &\le \frac{1}{3}e^{-2 (M^{j_{0}+1})^{c}},
\end{align}
for $\beta=0,1$, provided that $c<\frac{\log \frac{17}{16}}{\log M}$.
Hence, we obtain
\begin{equation}
    \lVert \partial^{\beta} (T-T_{N})\hat{w}_{j_{0}} \rVert<\frac{2}{3} e^{-2(M^{j_{0}+1})^{c}},
\end{equation}
for $\beta=0,1$. Moreover, we have
\begin{equation}
    \lVert \partial^{\beta} G_{\bm{p}, \lambda}(v_{j_{0}+1}) \rVert < e^{-2(M^{j_{0}+1})^{c}},
\end{equation}
for $\beta=0,1$.
Note that in the above $w_{j_{0}}$ is defined on $\mathop{\cup}\limits_{I\in\Lambda_{j_{0}+1}} B(I,\sqrt{b+2}\exp{(-(\log N)^{C_{1}})})$.  We need to extend its definition to the entire $(\bm{p},\lambda)$-parameter set $I_{0}$.

Define the following smooth function defined on $\mathbb{R}^{b+1}$
\begin{equation}
    \varphi(x)=
    \begin{cases}
        e^{\frac{1}{| x|^{2}-1}}, \ | x|<1,\\
        0, \ \ |x|\ge 1.
    \end{cases}
\end{equation}
Let $\psi(x)=\frac{1}{C} \varphi(x)$, where $C=\int_{x\in\mathbb{R}^{b+1}} \varphi(x)dx$. Define $\psi_{\delta}(x)=\frac{1}{\delta^{b+1}}\psi(\frac{x}{\delta})$. Let $\chi_{A}$ be characteristic function of $A$.
Denote $A_{j_{0}+1}=\mathop{\cup}\limits_{I\in\Lambda_{j_{0}+1}} B(I,\frac{1}{2}\sqrt{b+2}\exp{(-(\log N)^{C_{1}})})$ and $\delta_{j_{0}+1}=\frac{1}{2}\sqrt{b+2}\exp{(-(\log N)^{C_{1}})}$.
Let $\phi_{j_{0}+1}=\chi_{A_{j_{0}+1}}*\psi_{\delta_{j_{0}+1}}$. Thus, we have
\begin{itemize}
    \item $\phi_{j_{0}+1}(\bm{p},\lambda)=1$ when $(\bm{p},\lambda)\in \mathop{\cup}\limits_{I\in\Lambda_{j_{0}+1}} I$.
    \item $|\partial \phi_{j_{0}+1}|\lesssim\exp{(\log N)^{C_{1}}}$.
\end{itemize}
Let $\tilde{w}_{j_{0}}=w_{j_{0}}\phi_{j_{0}+1}$. Then, $\tilde{w}_{j_{0}}$ is defined smoothly on $I_{0}$. Moreover, we have that, for $(\bm{p},\lambda)\in I_{0}$,
\begin{align}
    &\tilde{w}_{j_{0}}\in L_{e}^{2}(\mathbb{T}^{d+1}),\\
    &\textup{supp } \hat{\tilde{w}}_{j_{0}}\subset B(0, M^{j_{0}+1}),\\
    &\lVert \partial^{\beta}  \tilde{w}_{j_{0}} \rVert<e^{-(\frac{3}{2}-)(M^{j_{0}+1})^{c}}.
\end{align}
Let
\begin{equation}
    v_{j_{0}+1}=v_{j_{0}}+\tilde{w}_{j_{0}}, \ \ u_{j_{0}+1}=u_{0}+v_{j_{0}+1}.
\end{equation}
We conclude that we have $v_{j_{0}+1}$, $u_{j_{0}+1}$ and $\Lambda_{j_{0}+1}$ such that
\begin{itemize}
    \item $u_{j_{0}+1}\in L^{2}_{e}(\mathbb{T}^{d+1})$ and $u_{j_{0}+1}$ is defined smoothly for $(\bm{p},\lambda)\in I_{0}$. Moreover, $\hat{u}_{j_{0}+1}(m,n)=\hat{u}_{j_{0}+1}(-m,-n)\in\mathbb{R}$ for $(m,n)\in \mathbb{Z}^{d}$.
    \item $\textup{supp }\hat{u}_{j_{0}+1}\subset B(0,M^{j_{0}})$.
    \item $|\partial^{\beta} \hat{u}_{j_{0}+1}(\xi)|<(C_{0}+\sum\limits_{j'=j_{0}}^{j_{0}+1}\frac{1}{j'^{2}}) e^{-|\xi|_{1}^{c}}$ for $\xi\in\mathbb{Z}^{d+1}$ and $\beta=0,1$, where $\partial$ refers to the derivative to $\lambda$ or $\bm{p}$.
    \item For $(\bm{p},\lambda)\in I\in\Lambda_{j_{0}+1}$, we  have
    \begin{equation}
        \lVert \partial^{\beta} G_{\bm{p},\lambda}(v_{j_{0}+1}) \rVert=\lVert  \partial^{\beta} \Gamma_{P}F_{\lambda}(u_{j_{0}+1}) \rVert<e^{-2 (M^{j_{0}+1})^{c}},
    \end{equation}
    for $\beta=0, 1$.
\end{itemize}
Iterating the above process, we get the following statement:
\begin{proposition}
    For $j\ge j_{0}$, there exists $v_{j}$ and $\Lambda_{j}$ satisfying the following properties:
    \begin{itemize}
        \item[(j.1)] $u_{j}=u_{0}+v_{j}\in L^{2}_{e}(\mathbb{T}^{d+1})$ and $u_{j}$ is defined smoothly for $(\bm{p},\lambda)\in I_{0}$. Moreover, $\hat{u}_{j}(m,n)=\hat{u}_{j}(-m,-n)\in\mathbb{R}$ for $(m,n)\in \mathbb{Z}^{d}$.
        \item[(j.2)] $\textup{supp }\hat{u}_{j}\subset B(0,M^{j})$.
        \item[(j.3)] $|\partial^{\beta} \hat{u}_{j}(\xi)|<(C_{0}+\sum\limits_{j'= j_{0}}^{j}\frac{1}{j'^{2}}) e^{-|\xi|_{1}^{c}}$ for $\xi\in\mathbb{Z}^{d+1}$ and $\beta=0,1$, where $\partial$ refers to the derivative to $\lambda$ or $\bm{p}$, and $C_{0}$ is a constant depending only on $m_{0}$, $d$ and $c$.
        \item[(j.4)] $\lVert \partial^{\beta}(u_{j}-u_{j-1}) \rVert<e^{-(\frac{3}{2}-)(M^{j})^{c}}$.
        \item[(j.5)] Let $N_{j}=M^{j}$. $\Lambda_{j}$ ($j>j_{0}$) is a collection of disjoint intervals of size $\exp (-(\log N_{j})^{C_{1}})$ satisfying:
        \begin{itemize}
            \item[(j.5.a)] For any $I'\in\Lambda_{j}$, there exists $I\in\Lambda_{j-1}$ such that $I'\subset I$.
            \item[(j.5.b)] For $(\bm{p},\lambda)\in I\in\Lambda_{j}$, we  have
                \begin{equation}
                    \lVert \partial^{\beta} G_{\bm{p},\lambda}(v_{j}) \rVert=\lVert  \partial^{\beta} \Gamma_{P}F_{\lambda}(u_{j}) \rVert<e^{-2 (M^{j})^{c}},
                \end{equation}
            for $\beta=0, 1$.
            \item[(j.5.c)]  For $(\bm{p},\lambda)\in I\in\Lambda_{j}$, we  have
            \begin{align}
               &\lVert \tilde{T}_{j-1, N_{j}}^{-1}\rVert<2\exp (\log N_{j})^{C_{2}},\\
               &|\tilde{T}_{j-1, N_{j}}^{-1}(\xi,\xi')|<2e^{-\frac{1}{2}|\xi-\xi'|_{1}^{c}}, \textup{ for } |\xi-\xi'|_{1}>N^{\frac{1}{2}}.
            \end{align}
            Here,   $\tilde{T}_{j-1}=\tilde{D}+\varepsilon^{2} S_{3  u_{j-1}^{2}}$. Moreover, if a point $(\bm{p},\lambda)\in I\in\Lambda_{j-1}$ ensures that
            \begin{align}
                &\lVert \tilde{T}_{j-1, N_{j}}^{-1}\rVert<\exp (\log N_{j})^{C_{2}},\label{tiaojian1tocheck}\\
               &|\tilde{T}_{j-1, N_{j}}^{-1}(\xi,\xi')|<e^{-\frac{1}{2}|\xi-\xi'|_{1}^{c}}, \textup{ for } |\xi-\xi'|_{1}>N^{\frac{1}{2}}\label{tiaojian2tocheck},
            \end{align}
            then there exists an interval $I'\in \Lambda_{j}$ such that $(\bm{p},\lambda)\in I'$.
        \end{itemize}
    \end{itemize}
    \label{qiterative}
\end{proposition}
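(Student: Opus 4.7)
The plan is to argue by induction on $j \ge j_0$. The base case $j=j_0$ is exactly the content of the four bullets established just before the proposition was stated, so the task reduces to showing that if $v_j, u_j, \Lambda_j$ satisfy (j.1)--(j.5), then the construction performed in detail for the step $j_0 \to j_0+1$ can be repeated verbatim to produce $v_{j+1}, u_{j+1}, \Lambda_{j+1}$ satisfying ((j+1).1)--((j+1).5) with $N_{j+1}=M^{j+1}$.

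For the inductive step I would first form the linearized operator $T_j = R_P(D+\varepsilon^2 \Lambda S_j) R_P$ and its self-adjoint relative $\tilde T_j = R_P(\tilde D + \varepsilon^2 S_j)R_P$ using $u_j$ in place of $u_{j_0}$. Since (j.3) gives the same Gevrey-type decay of $\hat u_j$ with a constant uniformly bounded by $C_0 + \sum_{j'\ge j_0} j'^{-2}$, the off-diagonal bound $|\partial^\beta S_j(\xi,\xi')| \lesssim e^{-(1-)|\xi-\xi'|_1^c}$ carries over, hence so does (\ref{2627}) for $T_j$. I would then mesh $\bigcup_{I\in\Lambda_j} I$ into boxes of size $\exp(-(\log N_{j+1})^{C_1})$ and collect in $\Lambda_{j+1}$ those $I'$ that contain at least one point where (\ref{tiaojian1tocheck})--(\ref{tiaojian2tocheck}) are satisfied. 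Since $C_1>C_2$, the same perturbation argument as in the $j_0$-case shows that on the enlarged neighborhood $\bigcup_{I\in\Lambda_{j+1}} B(I,\sqrt{b+2}\exp(-(\log N_{j+1})^{C_1}))$ the doubled bounds (\ref{l2bound2''})--(\ref{offdiagonal2''}) hold for $T_{j,N_{j+1}}$, which automatically gives (j+1.5.a) and (j+1.5.c), and the remaining interval in (j+1.5) is obtained by the inclusion used in the meshing procedure.

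Next I would define the Newton correction $\hat w_j = -T_{j,N_{j+1}}^{-1} \widehat{G}_{\bm{p},\lambda}(v_j)$ on this neighborhood. The bounds on $\|w_j\|$ and $\|\partial w_j\|$ follow exactly as in (\ref{36w})--(\ref{37w}), using the approximate-solution estimate (j.5.b) in place of the inductive hypothesis at $j_0$. To control $G_{\bm{p},\lambda}(v_j+w_j)$ I would expand using (\ref{taylor}) and split the residual into $(T_j - T_{j,N_{j+1}})\hat w_j$ and the quadratic and cubic terms $\varepsilon^2 \Gamma_P D^\alpha(3u_j w_j^2 + w_j^3)$. The quadratic and cubic terms are bounded by $e^{-(3-)(M^{j+1})^c}$ from the smallness of $w_j$. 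For the linear residual I would repeat the telescoping decomposition $(T-T_N)\hat w_j = (I-P_N)T P_{N/2}\hat w_j - (T-T_N)(I-P_{N/2})T_N^{-1}P_{N/4}\widehat{G}_{\bm{p},\lambda}(v_j)$ and apply the off-diagonal decays of $T_j$ and $T_{j,N_{j+1}}^{-1}$, together with the assumption $c<\tfrac{\log(17/16)}{\log M}$, to get the bound $e^{-2(M^{j+1})^c}$ claimed in (j+1.5.b).

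Finally, to extend $w_j$ from the $\sqrt{b+2}\exp(-(\log N_{j+1})^{C_1})$-neighborhood to the entire parameter set $I_0$, I would use the same mollified cutoff $\phi_{j+1} = \chi_{A_{j+1}} * \psi_{\delta_{j+1}}$ as in the $j_0$-construction, set $\tilde w_j = w_j \phi_{j+1}$, and let $v_{j+1}=v_j+\tilde w_j$, $u_{j+1}=u_0+v_{j+1}$. The derivative loss $|\partial\phi_{j+1}|\lesssim \exp((\log N_{j+1})^{C_1})$ is absorbed into the exponential factor in the bound on $\|\partial^\beta w_j\|$, since $(\log N_{j+1})^{C_1} \ll (M^{j+1})^c$. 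The step (j+1.4) follows from the bound on $\tilde w_j$, while (j+1.1) and (j+1.2) are immediate from the evenness and Fourier support of $w_j$ inherited from $\widehat{G}_{\bm{p},\lambda}(v_j)$ and $T_{j,N_{j+1}}^{-1}$. The main subtlety is the Gevrey bound (j+1.3): here I would note that each $\hat w_j$ introduces an additive error controlled by $e^{-(\frac{3}{2}-)(M^{j+1})^c}\le j^{-2} e^{-|\xi|_1^c}$ for $|\xi|_1 \le M^{j+1}$ and sufficiently large $j$, so the constant in (j.3) grows only by $1/j^2$ per step and the telescoping sum $C_0+\sum_{j'=j_0}^{j+1} j'^{-2}$ stays uniformly bounded. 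Maintaining this summable-in-$j$ loss for the Fourier-coefficient decay constant is the main point requiring care, as it is what ultimately guarantees the Gevrey smoothness of the limit solution.
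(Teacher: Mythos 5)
Your proposal is correct and takes essentially the same approach as the paper, which carries out the $j_0\to j_0+1$ step in detail and then simply states that iterating that process yields Proposition \ref{qiterative}. Your fleshed-out induction, including the explicit verification that the Gevrey constant in (j.3) only grows by the summable increment $1/j^2$ per step, is exactly the iteration the paper leaves to the reader.
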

\begin{remark}
    Note that $\Lambda_{j}$ is constructive and we do not yet have an estimate for its measure.
\end{remark}

Let $v=\lim\limits_{j\to \infty}v_{j}$. Then $v\in L_{e}^{2}(\mathbb{T}^{d+1})$ is defined smoothly on $I_{0}$. For $(\bm{p},\lambda)\in\mathop{\cap}\limits_{j\ge j_{0}}\mathop{\cup}\limits_{I\in\Lambda_{j}}I$, $v$ solves 
\begin{equation}
    G_{\bm{p},\lambda}(v)=0.
\end{equation}
Let $u=u_{0}+v$. Then, $u\in  L_{e}^{2}(\mathbb{T}^{d+1})$ satisfies that
\begin{align}
    &\hat{u}(m_{0},1)=\hat{u}(-m_{0},-1)=\frac{1}{2} p_{0},\\
    &\hat{u}(m,1)=\hat{u}(-m,-1)=\frac{1}{2}p_{m}, \textup{ for } |m|=|m_{0}|, m\ne m_{0},\\
    &\sum\limits_{\substack{(m,n)\in\mathbb{Z}^{d+1},\\(m,n)\notin S}} |\hat{u}(m,n)|e^{|(m,n)|_{1}^{c}}<e^{-\frac{1}{4}(M^{j_{0}})^{c}}<\varepsilon^{\frac{1}{8}}.
\end{align}
Moreover, we have
\begin{equation}
    |\partial^{\beta} u|\lesssim 1.
\end{equation}

\section{Solving the Q-equations and estimating the measure}
In this section, we solve the $Q$-equations and estimate the measure.

\subsection{Solving the Q-equations}
Now we come back to the $Q$-equations:
\begin{equation}
     (-(n\lambda)^{2}+\mu_{m}^{2})\hat{u}(m,n)+\varepsilon^{2}\langle m \rangle^{\alpha}(u^{3})^{\land}(m,n)=0,\  (m,n)\in S.
\end{equation}
More precisely, we have the following $Q$-equations:
\begin{equation}
    \frac{1}{2}(-\lambda^{2}+|m_{0}|^{2}+\rho)p_{m}+\varepsilon^{2}\langle m_{0} \rangle^{\alpha} (u^{3})^{\land}(m,1)=0,\ \ |m|=|m_{0}|. 
    \label{qequapre}
\end{equation}

Denote 
\begin{equation}
    \sigma=\varepsilon^{-2}(-\lambda^{2}+|m_{0}|^{2}+\rho)
    \label{sigmadef}
\end{equation}
and rewrite (\ref{qequapre}) as 
\begin{equation}
    \sigma p_{m}+2\langle m_{0} \rangle^{\alpha} (u^{3})^{\land}(m,1)=0,\ \ |m|=|m_{0}|.
    \label{bianxingaq}
\end{equation}

In particular, one has 
\begin{equation}
    u(x,\theta)=u_{0}(x,\theta)+O(\varepsilon^{\frac{1}{8}}),
\end{equation}
where $u_{0}(x,\theta)=p_{0}\cos( m_{0}\cdot x  +\theta)+\sum\limits_{\substack{|m|=|m_{0}|\\ m\ne m_{0}}} p_{m} \cos( m\cdot x+\theta)$. Denote $p_{m_{0}}=p_{0}$. Thus, we have
\begin{align}
\widehat{u^{3}}(m,1)=&\widehat{u_{0}^{3}}(m,1)+O(\varepsilon^{\frac{1}{8}})\\
=&\frac{1}{(2\pi)^{d+1}}\int_{\mathbb{T}^{d+1}}\left[  \sum\limits_{|m'|=|m_{0}|} p_{m'} \cos( m'\cdot x+\theta) \right]^{3}\cos(m\cdot x+\theta)dxd\theta\label{termtocalculate}\\
&+O(\varepsilon^{\frac{1}{8}}).\notag
\end{align}

Write
\begin{align}
    &\left[  \sum\limits_{|m'|=|m_{0}|} p_{m'} \cos( m'\cdot x+\theta) \right]^{3}\\
    =&p_{0}^{3} \cos^{3}(m_{0}\cdot x+\theta)\\
    +&3p_{0}^{2}\cos^{2}(m_{0}\cdot x+\theta)\left[\sum\limits_{\substack{|m'|=|m_{0}|\\ m'\ne m_{0}}} p_{m'} \cos( m'\cdot x+\theta)\right]\\
    +&O(|p_{m'}|^{2}; m'\ne m_{0}).
\end{align}
Now we calculate (\ref{termtocalculate}).

\textbf{Case $\bm{m=m_{0}}$.}
Note that $\cos^{4} \alpha=\frac{1}{8}\cos 4\alpha+\frac{1}{2}\cos 2\alpha+\frac{3}{8}$ and $\cos^{3}\alpha=\frac{1}{4}\cos 3\alpha+\frac{3}{4} \cos \alpha$. We have
\begin{align}
    &(\ref{termtocalculate})\notag\\
    =&\frac{1}{(2\pi)^{d+1}} \int_{\mathbb{T}^{d+1}} p_{0}^{3} \cos^{4}(m_{0}\cdot x+\theta)dxd\theta\notag\\
    &+3p_{0}^{2}\cos^{3}(m_{0}\cdot x+\theta)\left[ \sum\limits_{\substack{|m'|=|m_{0}|\\ m'\ne m_{0}}} p_{m'} \cos( m'\cdot x+\theta) \right]dxd\theta\notag\\
    &+O(|p_{m'}|^{2}; m'\ne m_{0})\notag\\
    =&\frac{1}{(2\pi)^{d+1}} \int_{\mathbb{T}^{d+1}} p_{0}^{3}\left(  \frac{1}{8}\cos(4m_{0}\cdot x+4\theta)+\frac{1}{2}\cos(2m_{0}\cdot x+2\theta)+\frac{3}{8} \right)\\
    +& 3p_{0}^{3}\left( \frac{1}{4}\cos(3m_{0}\cdot x+3\theta)+\frac{3}{4}\cos(m_{0}\cdot x+\theta) \right)
    \left[ \sum\limits_{\substack{|m'|=|m_{0}|\\ m'\ne m_{0}}} p_{m'} \cos( m'\cdot x+\theta) \right]dxd\theta\notag\notag\\
    &+O(|p_{m'}|^{2}; m'\ne m_{0})\notag\\
    =&\frac{3}{8}p_{0}^{3}+O(|p_{m'}|^{2}; m'\ne m_{0}).
\end{align}
Thus, for $m=m_{0}$, equation (\ref{bianxingaq}) becomes
\begin{equation}
    \sigma p_{0}+\frac{3}{4}\langle m_{0} \rangle^{\alpha} p_{0}^{3}+O(|p_{m'}|^{2}; m'\ne m_{0})+O(\varepsilon^{\frac{1}{8}})=0.
\end{equation}

\textbf{Case $\bm{m\ne m_{0}}$.}
Note that for $|m_{0}|=|m|=|m'|$, we have
\begin{equation}
    \int_{\mathbb{T}^{d+1}} \cos(2m_{0}\cdot x+2\theta)\cos(m\cdot x+\theta)\cos(m'\cdot x+\theta)=0,
\end{equation}
unless $m=m'=m_{0}$. Thus, we have
\begin{align}
    &(\ref{termtocalculate})\notag\\
    =&\frac{1}{(2\pi)^{d+1}}\int_{\mathbb{T}^{d+1}} p_{0}^{3}\cos^{4}(m_{0}\cdot x+\theta)\cos(m\cdot x+\theta)\notag\\
    &+3p_{0}^{2}\cos^{2}(m_{0}\cdot x+\theta)\cos(m\cdot x+\theta)\left[ \sum\limits_{\substack{|m'|=|m_{0}|\\ m'\ne m_{0}}} p_{m'} \cos( m'\cdot x+\theta) \right]dxd\theta\notag\\
    &+O(|p_{m'}|^{2}; m'\ne m_{0})\notag\\
    =&\frac{1}{(2\pi)^{d+1}} \int_{\mathbb{T}^{d+1}} p_{0}\left(\frac{1}{4}\cos (3m_{0}\cdot x+3\theta)+\frac{3}{4}\cos(m_{0}\cdot x+\theta)\right)\cos(m\cdot x+\theta)\notag\\
    &+3p_{0}^{2}\frac{\cos(2m_{0}\cdot x+2\theta)+1}{2} \cos(m\cdot x+\theta)\left[ \sum\limits_{\substack{|m'|=|m_{0}|\\ m'\ne m_{0}}} p_{m'} \cos( m'\cdot x+\theta) \right]dxd\theta\notag\\
    &+O(|p_{m'}|^{2}; m'\ne m_{0})\notag\\
    =&\frac{3}{4} p_{0}^{2}p_{m}+O(|p_{m'}|^{2}; m'\ne m_{0}).
\end{align}
Thus, for $m=m_{0}$, equation (\ref{bianxingaq}) becomes
\begin{equation}
    \sigma p_{m}+\frac{3}{2}\langle m_{0} \rangle^{\alpha} p_{0}^{2}p_{m}+O(|p_{m'}|^{2}; m'\ne m_{0})+O(\varepsilon^{\frac{1}{8}})=0.
\end{equation}
Therefore, we have the $Q$-equations as follows:
\begin{align}
\sigma p_{0}+\frac{3}{4}\langle m_{0} \rangle^{\alpha} p_{0}^{3}+O(|p_{m'}|^{2}; m'\ne m_{0})+O(\varepsilon^{\frac{1}{8}})=0,\\
 \sigma p_{m}+\frac{3}{2}\langle m_{0} \rangle^{\alpha} p_{0}^{2}p_{m}+O(|p_{m'}|^{2}; m'\ne m_{0})+O(\varepsilon^{\frac{1}{8}})=0.
\end{align}
This implies 
\begin{align}
    \sigma=-\frac{3}{4} \langle m_{0} \rangle^{\alpha} p_{0}^{2}+O(\varepsilon^{\frac{1}{8}}),
    p_{m}=O(\varepsilon^{\frac{1}{8}}), \textup{ for } m\ne m_{0}.
\end{align}
Hence, by (\ref{sigmadef}), we obtain
\begin{equation}
    \lambda^{2}=|m_{0}|^{2}+\rho+\frac{3}{4}\langle m_{0} \rangle^{\alpha} p_{0}^{2}\varepsilon^{2}+O(\varepsilon^{\frac{17}{8}}).
    \label{tutua}
\end{equation}
Denote $\Gamma=\{(\bm{p}(p_{0}), \lambda(p_{0}))|p_{0}\in[1,2]\}$. It is left to estimate the measure of  $$\Pi_{0}\left(\Gamma\cap \left(I_{0}\setminus \left(\mathop{\cap}\limits_{j\ge j_{0}}\mathop{\cup}\limits_{I\in\Lambda_{j}}I \right)\right)\right),$$ where $\Pi_{0}$ refers to the projection on the $p_{0}$ variable.
\subsection{Measure estimate in finite step}
Recall $\tilde{T}_{j}=R_{P}(\tilde{D}+\varepsilon^{2} S_{3u_{j}^{2}})R_{P}$, where
\begin{equation}
    \tilde{D}=\textup{diag}\left(\frac{-(n\lambda)^{2}+\mu_{m}^{2}}{n\cdot 0+\langle m\rangle^{\alpha}}:(m,n)\in\mathbb{Z}^{d+1}\right).
\end{equation}
In the sequel of this paper, we omit $R_{P}$ for simplicity.
By (\ref{tutua}), we have
\begin{align}
    \tilde{D}_{m,n}&=\frac{-(n\lambda)^{2}+\mu_{m}^{2}}{n\cdot 0+\langle m\rangle^{\alpha}}\notag\\
    &=\frac{(1-n^{2})\rho-n^{2}|m_{0}|^{2}+|m|^{2}-\frac{3}{4}\varepsilon^{2}n^{2}\langle m_{0}\rangle^{2}p_{0}^{2}+O(n^{2}\varepsilon^{\frac{17}{8}})}{\langle m \rangle^{\alpha}}\notag\\
    &=\frac{(1-n^{2})\rho-n^{2}|m_{0}|^{2}+|m|^{2}+O(n^{2}\varepsilon^{2})}{\langle m \rangle^{\alpha}}\notag\\
    &=\frac{(1-n^{2})\rho-n^{2}|m_{0}|^{2}+|m|^{2}}{\langle m \rangle^{\alpha}}+O(n^{2}\varepsilon^{2}),
\end{align}
for $(\bm{p},\lambda)\in \Gamma$.
Since $\rho$ satisfies
\begin{equation}
    |n\rho-k|>\gamma|n|^{-C_{3}}, \textup{ for } n,k\in\mathbb{Z},  n\ne 0,
\end{equation}
where $C_{3}=2d$.
Let 
\begin{equation}
N_{j+1}^{2}\varepsilon^{2}< N_{j+1}^{-2C_{3}-2}.
\label{xianzhi1}
\end{equation}
Then, for $|n|<N_{j+1}$, $|m|<N_{j+1}$, and $(m,n)\notin S$,  we have
\begin{equation}
    |\tilde{D}_{m,n}|>\gamma N_{j+1}^{-2 C_{3}-\alpha}-O(N_{j+1}^{2}\varepsilon^{2})>\frac{\gamma}{2}N_{j+1}^{-2 C_{3}-\alpha}.
    \label{xiaofenbujie}
\end{equation}
Denote $N=N_{j+1}$.
Note that we have
\begin{align}
    \tilde{T}_{j,N}^{-1}=&(I+\varepsilon^{2}\tilde{D}_{N}^{-1}S_{3u_{j}^{2}})^{-1}\tilde{D}_{N}^{-1}\\
    =&\tilde{D}_{N}^{-1}+\sum\limits_{k=1}^{\infty}(-\varepsilon^{2}\tilde{D}_{N}^{-1}S_{3u_{j}^{2}})^{k}\tilde{D}_{N}^{-1}.
\end{align}
By (\ref{xiaofenbujie}), we have
\begin{equation}
    \lVert \tilde{D}_{N}^{-1} \rVert<\frac{2}{\gamma} N^{2C_{3}+\alpha}.
\end{equation}
Thus, we have
\begin{equation}
    \lVert \tilde{T}_{j,N}^{-1} \rVert<2  \lVert \tilde{D}_{N}^{-1} \rVert<\frac{4}{\gamma} N^{2C_{3}+\alpha},
    \label{boundyouxianbu}
\end{equation}
as long as 
\begin{equation}
    \frac{2}{\gamma} C_{0}\varepsilon^{2} N^{2C_{3}+\alpha} <\frac{1}{10}.
    \label{xianzhi2}
\end{equation}
Besides, for $\xi,\xi'\in\mathbb{Z}^{d+1}$, $\xi\ne\xi'$, we have
\begin{align}
    |\tilde{T}_{j, N}^{-1}(\xi,\xi')|\le &\sum\limits_{k=1}^{\infty}|(-\varepsilon^{2}\tilde{D}_{N}^{-1} S_{3u_{j}^{2}})^{k}\tilde{D}_{N}^{-1}(\xi,\xi')| \notag\\
    \le & \sum\limits_{k=1}^{\infty} \varepsilon^{2k} (\frac{2}{\gamma}N^{2C_{3}+\alpha})^{k+1} \sum\limits_{|\xi_{1}|,...,|\xi_{k-1}|<N} e^{-(1-)(|\xi-\xi_{1}|_{1}^{c}+\cdots +|\xi_{k-1}-\xi'|_{1}^{c})}\notag\\
    \le & \sum\limits_{k=1}^{\infty} \varepsilon^{2k} (\frac{2}{\gamma}N^{2C_{3}+\alpha})^{k+1}(N^{d+1})^{k-1} e^{-(1-)|\xi-\xi'|_{1}^{c}}\notag\\
    < & e^{-(1-)|\xi-\xi'|_{1}^{c}} \sum\limits_{k=1}^{\infty} (\varepsilon^{2} N^{4 C_{3}+2\alpha+2d})^{k}\notag\\
    <& e^{-(1-)|\xi-\xi'|_{1}^{c}},\label{shuaijianyouxianbu}
\end{align}
as long as 
\begin{equation}
    \varepsilon^{2} N^{4C_{3}+2\alpha+2d}<\frac{1}{2}.
    \label{xianzhi3}
\end{equation}
Thus, for $\varepsilon^{2} N_{j+1}^{4C_{3}+2\alpha+5d}<1$, we have (\ref{boundyouxianbu}) and (\ref{shuaijianyouxianbu}), which imply (\ref{tiaojian1tocheck}) and (\ref{tiaojian2tocheck}).
Thus, we conclude that for
\begin{equation}
    \varepsilon^{2} N_{j+1}^{4C_{3}+2\alpha+5d}<1,
    \label{youxianbutiaojian}
\end{equation}
we have
\begin{equation}
    \Pi_{0}\left(\Gamma\cap \left(I_{0}\setminus \left(\mathop{\cup}\limits_{I\in\Lambda_{j+1}}I \right)\right)\right)=\emptyset.
\end{equation}

\subsection{The separation lemma}
 To estimate
 the measure of $\Gamma \cap \{(\mathop{\cup}\limits_{I\in\Lambda_{j}}I) \setminus (\mathop{\cup}\limits_{I'\in\Lambda_{j+1}}I')\}$
 for $\varepsilon^{2} N_{j}^{4C_{3}+2\alpha+5d}\ge 1$, 
 we need  the following arithmetical lemma which is an extension of Lemma 20.14 in \cite{bourgain2005green} to separate the singular sites in unbounded case:

 \begin{lemma}
     Let $B$ be a large number and $\mathcal{K}$ be a compact set.
     Choose constants $C$, $C'$, (depending on $\mathcal{K}$, $\tilde{b}$, $d$) and $C''$ (depending on $C$, $C'$, $\mathcal{K}$, $\tilde{b}$, $d$) large enough.
     Assume $\lambda'\in \mathcal{K}\subset\mathbb{R}^{\tilde{b}}$ satisfies the condition
     \begin{equation}
         |P(\lambda')|>B^{-C'}
         \label{guanyvlambdadejiashenlw}
     \end{equation}
     for all polynomials $P(X)\in \mathbb{Z}[X_{1},...,X_{\tilde{b}}]$, $P(X)\ne 0$ of degree less than $10 d$ and with coefficients $|a_{\beta}|<B^{C}$. Consider a sequence $(\xi_{j})_{1\le j\le k}$ of distinct elements of $\mathbb{Z}^{\tilde{b}+d}$ such that, for some $\sigma\in\mathbb{R}$, for all $j$, we have
     \begin{equation}
         |(\lambda'\cdot k_{j}+\sigma)^{2}-|n_{j}|^{2}|<B, \textup{ where } \xi_{j}=(n_{j}, k_{j})\
         \label{modification}
     \end{equation}
     and 
     \begin{equation}
         |\xi_{j}-\xi_{j-1}|<B.
     \end{equation}
Furthermore, assume that
\begin{equation}
    \max\limits_{n}(\#\{1\le j\le k| n_{j}=n\})<B'
    \label{duiyingndegeshuk}
\end{equation}
Then, we have
\begin{equation}
    k<(B B')^{C''}.
\end{equation}
\label{fenlixingdinglifornlw}
 \end{lemma}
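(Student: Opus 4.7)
The plan is to mimic the strategy of Bourgain's Lemma~20.14 in \cite{bourgain2005green}, with care taken both for the now-unbounded defect size $B$ in (\ref{modification}) and for the auxiliary repetition bound $B'$ that hypothesis (\ref{duiyingndegeshuk}) affords. Introducing $y_j=\lambda'\cdot k_j+\sigma$, the defining inequality reads $|y_j^2-|n_j|^2|<B$, so at each chain position $y_j$ is pinned to one of two values $\pm\sqrt{|n_j|^2+O(B)}$. A pigeonhole over the sign of $y_j$ and over the small regime $|n_j|^2\le 2B$ (which by (\ref{duiyingndegeshuk}) already accounts for at most $O((BB')^{d/2+1})$ indices) reduces matters to a subchain on which $y_j=|n_j|+O(B/|n_j|)$; this is the essentially linear regime to exploit.

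Next, I would use the chain condition $|\xi_j-\xi_{j-1}|<B$ to bound the diameter of the reduced subchain by $kB$ and, crucially, combine this with (\ref{duiyingndegeshuk}) to pick a well-spread sub-sample $J\subset\{1,\dots,k\}$ of size $\tilde{b}+2$ whose $n_j$'s are all distinct and whose jet $(k_j)_{j\in J}$ is in general position. The relation $y_j=\lambda'\cdot k_j+\sigma$ on the $\tilde{b}+1$ unknowns $(\lambda',\sigma)$ then yields a nontrivial integer linear dependence $\sum_{j\in J}c_j y_j=0$ with $\sum_{j\in J}c_j=0$ and $|c_j|\lesssim(kB)^{O(1)}$. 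Substituting $y_j=|n_j|+O(B/|n_j|)$ and squaring off the $|n_j|$'s in pairs converts this into a polynomial identity $P(\lambda')=O(B)$ with integer coefficients of size $(kB)^{O(1)}$ and degree $\le 10d$ (after absorbing the squarings needed to eliminate the $|n_j|$'s).

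The Diophantine hypothesis (\ref{guanyvlambdadejiashenlw}) then forces $|P(\lambda')|>B^{-C'}$ whenever $P\ne 0$, so the two estimates $|P(\lambda')|=O(B)$ and $|P(\lambda')|>B^{-C'}$ are compatible only if the coefficient size, and hence $k$, obeys $k<(BB')^{C''}$. The main obstacle is precisely the non-vanishing of $P$: since $|n_j|$ is not polynomial in $n_j$, one must carefully track the signs fixed in the pigeonhole step and show that the squaring used to eliminate the $|n_j|$'s does not produce an identically zero polynomial in $\lambda'$. This is where the bound $B'$ enters decisively --- it guarantees enough distinct $n_j$'s on the subchain to make the transversality argument work --- and it is also where the chain closeness $|\xi_j-\xi_{j-1}|<B$ is used, since it keeps the coefficients of $P$ of polynomial rather than exponential size in $k$. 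Technical bookkeeping of this non-degeneracy, extending Bourgain's argument from the bounded to the unbounded case, is what occupies the appendix.
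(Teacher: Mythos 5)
Your plan departs substantially from the paper's proof, and as written it has a gap that is fatal rather than merely technical. The linear dependence $\sum_{j\in J}c_j y_j=0$ you extract from a subsample $J$ of size $\tilde{b}+2$ is a consequence of a dependence among the integer vectors $(k_j,1)\in\mathbb{Z}^{\tilde{b}+1}$ alone; it holds identically in $(\lambda',\sigma)$. Once you substitute $y_j=|n_j|+O(B/|n_j|)$ and square, you obtain an approximate relation among the \emph{integers} $|n_j|^2$, with no $\lambda'$ left in it. There is no polynomial $P(\lambda')$ to which (\ref{guanyvlambdadejiashenlw}) could be applied. Even granting the existence of a nontrivial $P$, the claimed incompatibility of $|P(\lambda')|=O(B)$ with $|P(\lambda')|>B^{-C'}$ is not a contradiction — both hold simultaneously on a range of size $\sim B$ — so no bound on $k$ follows from it. To force a contradiction you would need $P(\lambda')$ to be small (below $B^{-C'}$), or the coefficients of $P$ to exceed the admissible bound $B^C$, and neither is established.

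The paper's argument is quite different and avoids these pitfalls. It works directly with the quadratic form $|n|^2-(\lambda'\cdot k)^2=T_+\xi\cdot T_-\xi$ via the operators $T_\pm:(n,k)\mapsto(n,\pm k\cdot\lambda')$, so no square roots of $|n_j|^2$ ever appear. The core mechanism is a dimension-reduction iteration: one selects a minimal window length $K$ realizing $d_1=\dim[T_+\Delta_j\xi:j\in I]$ over every subinterval of $I$ of length $K$, uses the Diophantine hypothesis to lower-bound Gram-type determinants formed from the increments, and applies Cramer's rule to deduce $\textup{diam}\{n_j:j\in I\}<B^C K^2$. The repetition bound $B'$ then converts the count $\#\{n_j:j\in I\}<B^CK^{2d}$ into $|I|<B'B^CK^{2d}$, and iterating the dimension drop at most $d+1$ times closes the argument. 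The case $\sigma\ne 0$ is handled by a Diophantine approximation $\sigma\approx\bar k\cdot\lambda'$ (using that $\lambda'$ may be taken irrational), not by pigeonholing on signs. You would need to rebuild your argument along these lines; in particular, the Diophantine condition must be made to act on determinants built from the increments $\Delta_j\xi$ (as in the paper), not extracted from an identity that cancels $\lambda'$ entirely.
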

The proof of this lemma is essentially the same as Lemma 20.14 in \cite{bourgain2005green}, except that the difference in (\ref{modification}) leads to some minor modifications. For the convenience of the reader, we provide the proof in Appendix \ref{proof}.

Having Lemma \ref{fenlixingdinglifornlw},
it is natural to introduce the following conditions:
\begin{definition}
    In this paper, we say a vector $\lambda\in  \mathbb{R}^{\tilde{b}}$ satisfies a generalized Diophantine condition, if for any polynomial $P(X)\in \mathbb{Z}[X_{1},....,X_{\tilde{b}}]$, $P(X)\ne 0$ of degree less than $d$, we have
    \begin{equation}
        |P(\lambda)|>\gamma \left(|\bm{a}_{P}|\right)^{-\tau}, 
        \notag
    \end{equation}
    where $\bm{a}_{P}$ is the vector made up of coefficients of the polynomial $P(X)$. We  denote this condition by $gDC_{\tilde{b}, d, \gamma, \tau}$. We also denote the set of vectors satisfying this condition by $gDC_{\tilde{b}, d, \gamma, \tau}$. When $\tilde{b}=1$, we omit $\tilde{b}$, i.e., $gDC_{d,\gamma,\tau}$.
    
    Furthermore, if $d$, $\gamma$ and $\tau$ is fixed, we denote by  $gDC_{M}$ as the condition:
    \begin{equation}
         |P(\lambda)|>\gamma \left(|\bm{a}_{P}|\right)^{-\tau}, 
        \notag
    \end{equation}
    for all polynomials $P(X)\in\mathbb{Z}[X_{1}, ..., X_{b}]$, $P(X)\ne 0$  of degree less than $d$ and with coefficients $|a_{\beta}|\le M$.
\end{definition}
\begin{remark}
    In this paper, we only use the case when $\lambda$ is a scale. However, we give the vector case for generality. 
\end{remark}
 See Appendix \ref{gDCproperties} for some facts about the generalized Diophantine conditions.

\subsection{Measure estimate for infinite steps}

Since  we have (\ref{NONRE2}), we obtain
\begin{equation}
    \left| \sum\limits_{k=0}^{10d} a_{k}(\sqrt{\rho+|m_{0}|^{2}})^{k} \right|>(\log\log\frac{1}{\varepsilon})^{-1}\left(\sum |a_{k}|\right)^{-\tilde{C}}, \textup{ for all } \{a_{k}\}\in\mathbb{Z}^{10d+1}\setminus\{0\}.
\end{equation}
By (\ref{tutua}), we have
\begin{align}
    \left| \sum\limits_{k=0}^{10d} a_{k}\lambda^{k} \right|>&(\log\log \frac{1}{\varepsilon})^{-1}\left(\sum |a_{k}|\right)^{-\tilde{C}}-10d\langle m_{0} \rangle^{\alpha+10d}\varepsilon \left(\sum |a_{k}|\right)\\
    >&\frac{1}{2}(\log\log \frac{1}{\varepsilon})^{-1}\left(\sum |a_{k}|\right)^{-\tilde{C}},
\end{align}
as long as $10d\langle m_{0} \rangle^{\alpha+10d}\varepsilon <\frac{1}{2}(\log\log \frac{1}{\varepsilon})^{-1}\left(\sum |a_{k}|\right)^{-\tilde{C}-1}$.
Let $C_{5}=4(\tilde{C}+1)d$ and
\begin{equation}
    R_{\bm{a}}=\left\{\lambda\in[\lambda_{0}-1, \lambda_{0}+1]: \left|\sum\limits_{k=0}^{10d} a_{k}\lambda^{k}\right|\le \left(\sum |a_{k}|\right)^{-C_{5}} \right\}.
\end{equation}
Then, by Lemma \ref{ekdegaojiedaoshuceduguji}, we have
\begin{equation}
    \textup{mes } R_{\bm{a}}<\left(\sum |a_{k}|\right)^{-\frac{C_{5}}{d}}.
\end{equation}
Let
\begin{equation}
    \tilde{R}_{\bm{a}}=\left\{p_{0}\in[1,2]: \left|\sum\limits_{k=0}^{10d} a_{k}\lambda^{k}(p_{0})\right|\le \left(\sum |a_{k}|\right)^{-C_{5}} \right\}.
\end{equation}
By (\ref{tutua}), we have
\begin{equation}
    \textup{mes }  \tilde{R}_{\bm{a}}<\frac{\left(\sum |a_{k}|\right)^{-C_{5}}}{\langle m_{0}\rangle^{\alpha}\varepsilon^{2}}<\left(\sum |a_{k}|\right)^{-\tilde{C}-1},
\end{equation}
for $10d\langle m_{0} \rangle^{\alpha+10d}\varepsilon \ge \frac{1}{2}(\log\log \frac{1}{\varepsilon})^{-1}\left(\sum |a_{k}|\right)^{-\tilde{C}-1}$.
Thus, we have
\begin{equation}
    \textup{mes } \mathop{\cup}\limits_{|\bm{a}|\ge \varepsilon^{-\frac{1}{2(\tilde{C}+1)}}} \tilde{R}_{\bm{a}}<\sum\limits_{|\bm{a}|\ge \varepsilon^{-\frac{1}{2(\tilde{C}+1)}}} |\bm{a}|^{-\tilde{C}-1}<\varepsilon^{\tilde{c}},
\end{equation}
where $\tilde{c}$ is a small constant depending on $d$.
Denote 
\begin{equation}
    W=\left\{ (\bm{p},\lambda): \left|\sum\limits_{k=0}^{10d} a_{k}\lambda^{k}\right|> (\log\log\frac{1}{\varepsilon})^{-1} \left(\sum |a_{k}|\right)^{-C_{5}} \textup{ for all } \{a_{j}\}\in\mathbb{Z}^{10d+1}\setminus\{0\}    \right\}.
\end{equation}
By the above argument, we have
\begin{equation}
    \textup{mes }\Pi_{0} (\Gamma\cap(I_{0}\setminus W))<\varepsilon^{\tilde{c}}.
\end{equation}

For simplicity, we denote 
\begin{equation}
    \tilde{\Lambda}_{j}=\mathop{\cup}\limits_{I\in\lambda_{j}} I
\end{equation}
and
\begin{equation}
    \tilde{\Lambda}^{(\infty)}=\mathop{\cap}_{j\ge j_{0}}\tilde{\Lambda}_{j}.
\end{equation}
We have 
\begin{align}
    \Gamma\cap\left(I_{0}\setminus\tilde{\Lambda}^{(\infty)}\right)
    \subset & \left[\Gamma\cap(I_{0}\setminus W)\right]\cup \left[\Gamma\cap(I_{0}\setminus\tilde{\Lambda}^{(\infty)})\cap W\right]\\
    \subset &  \left[\Gamma\cap(I_{0}\setminus W)\right]\cup \left\{\Gamma\cap\left[ \left(\mathop{\cup}\limits_{j\ge j_{0}}(\tilde{\Lambda}_{j}\setminus\tilde{\Lambda}_{j+1})\right)\cap W \right]      \right\}\\
    = &\left[\Gamma\cap(I_{0}\setminus W)\right]\cup\left\{ 
    \mathop{\cup}\limits_{j\ge j_{0}} \Gamma\cap \left[ (\tilde{\Lambda}_{j}\setminus\tilde{\Lambda}_{j+1})\cap W \right] \right\}.
\end{align}
Note that we have estimate $\Gamma\cap(I_{0}\setminus W)$ and $\Gamma\cap (\tilde{\Lambda}_{j}\setminus\tilde{\Lambda}_{j+1})$ for $\varepsilon^{2} N_{j}^{4C_{3}+2\alpha+5d}< 1$.
Thus, it is left to estimate
\begin{equation}
    \Pi_{0}\left(\Gamma\cap \left[ (\tilde{\Lambda}_{j}\setminus\tilde{\Lambda}_{j+1})\cap W \right]\right)
    \label{yaogujiyayayayaya}
\end{equation}
for 
\begin{equation}
    \varepsilon^{2} N_{j}^{4C_{3}+2\alpha+5d}\ge 1.
    \label{youxianbutiaojianfan}
\end{equation}

In the sequel of this section, we assume $\varepsilon^{2} N_{j}^{4C_{3}+2\alpha+5d}\ge 1$. We suppose $\alpha$ is small enough.
Fix $p_{0}\in \Pi_{0}(\Gamma\cap \tilde{\Lambda}_{j}\cap W)$. By proposition \ref{qiterative}, we have that, for $(\bm{p}(p_{0}), \lambda(p_{0}))$ and $j'<j$,
\begin{align}
    &\lVert \tilde{T}_{j', N_{j'+1}}^{-1} \rVert<2\exp (\log N_{j'+1})^{C_{2}},\\
    &|\tilde{T}_{j', N_{j'+1}}^{-1}(\xi,\xi')|<2e^{-\frac{1}{2}|\xi-\xi'|_{1}^{c}}, \textup{ for } |\xi-\xi'|_{1}>N_{j'+1}^{\frac{1}{2}}.
    \end{align}
Moreover, we also have
\begin{equation}
    |(\hat{u}_{j}-\hat{u}_{j'})(\xi)|<e^{-\frac{2}{5}(M^{j'})^{c}} e^{-|\xi|_{1}^{c}}.
\end{equation}
Thus, we have
\begin{equation}
    |(\tilde{T}_{j}-\tilde{T}_{j'})(\xi,\xi')|<e^{-\frac{1}{4}(M^{j'})^{c}}e^{-|\xi-\xi'|_{1}^{c}}.
\end{equation}
Let $j'=\lfloor \frac{j}{3}\rfloor$. Since
\[
|N_{j'+1}^{C}\exp{(\log N_{j'+1})^{C_{2}}}e^{-\frac{2}{5}(M^{j'})^{c}}|\ll 1,
\]
  we have
\begin{align}
    &\lVert \tilde{T}_{j, N_{j'+1}}^{-1} \rVert<2\exp (\log N_{j'+1})^{C_{2}},\label{Q01}\\
    &|\tilde{T}_{j, N_{j'+1}}^{-1}(\xi,\xi')|<2 e^{-\frac{1}{2}|\xi-\xi'|_{1}^{c}}, \textup{ for } |\xi-\xi'|_{1}>N_{j'+1}^{\frac{1}{2}},\label{Q02}
\end{align}
for $p_{0}\in \Pi_{0}(\Gamma\cap \tilde{\Lambda}_{j}\cap W)$.

To control $\tilde{T}_{j, N_{j+1}}^{-1}$, cover $Q=[-N_{j+1}, N_{j+1}]^{d+1}$ by interval $Q_{0}=[-N_{j'+1}, N_{j'+1}]$ and intervals $Q_{r}\subset \mathbb{Z}^{d+1}$ of size $N_{j+1}^{\frac{1}{3}}$ such that $\textup{dist } (0, Q_{r})>N_{j+1}^{\frac{1}{4}}$.

The above argument tells us for $p_{0}\in \Pi_{0}\Gamma\cap \tilde{\Lambda}_{j}\cap W$, we have (\ref{Q01}) and (\ref{Q02}). 

Now we estimate $\tilde{T}_{j, Q_{r}}^{-1}$.

Let
\begin{align}
    &\{(m,n)\in\mathbb{Z}^{d+1}:|-n^{2}\lambda^{2}+|m|^{2}+\rho|<N_{j+1}^{\alpha}\}\\
    \subset & \{(m,n)\in\mathbb{Z}^{d+1}:|-n^{2}\lambda^{2}+|m|^{2}|<2 N_{j+1}^{\alpha}\}=\Omega_{j}.
\end{align}
For $p_{0}\in \Pi_{0}\Gamma\cap \tilde{\Lambda}_{j}\cap W$, we have that $\lambda(p_{0})$ satisfies
\begin{equation}
    |\sum\limits_{k=0}^{10d} a_{k} \lambda^{k}(p_{0})|>N_{j+1}^{-C}
\end{equation}
for all $|\bm{a}|<N_{j+1}^{C}$ and $\bm{a}\ne0$. Let $B=2N_{j+1}^{\alpha}$ in Lemma \ref{fenlixingdinglifornlw}. Note that
\begin{equation}
    \max\limits_{m}(\#\{(m',n):|-n^{2}\lambda^{2}+|m'|^{2}|<2N_{j+1}^{\alpha}, m'=m\})<4N_{j+1}^{\alpha}.
\end{equation}
Let $\{\xi_{k}\}\subset\Omega_{j}$ and $|\xi_{k}-\xi_{k-1}|<2 N_{j+1}^{\alpha}$. By Lemma \ref{fenlixingdinglifornlw}, we have
\begin{equation}
    \#\{\xi_{k}\}<(N_{j+1}^{\alpha}\cdot N_{j+1}^{\alpha})^{C}=N_{j+1}^{\alpha C_{6}}.
\end{equation}
Here, $C_{6}$ is a constant depending on $d$.
Thus, there exists a partition of $\Omega_{j}$, we denote it by $\{\Omega_{\beta}\}$ such that
\begin{align}
    &\textup{diam } \Omega_{\beta}<N_{j+1}^{\alpha C_{6}+\alpha},\\
    &\textup{dist }(\Omega_{\beta}, \Omega_{\beta'})> 2N_{j+1}^{\alpha}, \textup{ for } \beta\ne\beta'.
\end{align}
Recall 
\begin{equation}
    \lambda^{2}=|m_{0}|^{2}+\rho+\frac{3}{4}\langle m_{0} \rangle^{\alpha} p_{0}^{2}\varepsilon^{2}+O(\varepsilon^{\frac{17}{8}}).
    \notag
\end{equation}
Let $p_{0}'\in B(p_{0}, \varepsilon^{-2}N_{j+1}^{-3})$. For $(m,n)\in Q\setminus\Omega_{j}$, we have
\begin{align}
    |-n^{2}\lambda^{2}(p_{0}')+|m|^{2}|>& |-n^{2}\lambda^{2}(p_{0})+|m|^{2}|-n^{2}|\lambda^{2}(p_{0})-\lambda^{2}(p_{0}')|\notag\\
    >& \frac{3}{2} N_{j+1}^{\alpha}.
\end{align}
Thus, the partition $\{\Omega_{\beta}\}$ can be applied to $B(p_{0}, \varepsilon^{-2}N_{j+1}^{-3})$.
Consider $Q_{r}\cap \Omega_{\beta}$. Let $\tilde{\Omega}_{r, \beta}$ be the $N_{j+1}^{\alpha/2}$ neighborhood of $Q_{r}\cap \Omega_{\beta}$. Now we consider $\tilde{T}_{j,\tilde{\Omega}_{r, \beta}}$.
Let $E_{r,\beta, s}(p_{0})=E_{r,\beta, s}(\bm{p}(p_{0}, \lambda(p_{0})))$ be the eigenvalue of $\tilde{T}_{j,\tilde{\Omega}_{r, \beta}}$ ($s$ refers to the index of eigenvalue).
By definition of $\tilde{\Omega}_{r, \beta}$, there exists $(m_{0}, n_{0})\in \tilde{\Omega}_{r, \beta}$ such that
\begin{equation}
    |-n_{0}^{2}\lambda^{2}+m_{0}^{2}|<2N_{j+1}^{\alpha}\textup{ and } |(m_{0}, n_{0})|>N_{j+1}^{1/4}.
\end{equation}
Thus, we have
\begin{equation}
    |n_{0}|>N_{j+1}^{1/5}.
\end{equation}
For all $(m, n)\in \tilde{\Omega}_{r, \beta}$, we have
\begin{equation}
    |n|>N_{j+1}^{1/5}-N_{j+1}^{\alpha C_{6}+2\alpha}>N_{j+1}^{1/6}.
\end{equation}
Recall that we have
\begin{equation}
    \tilde{T}_{j}=\textup{diag}\left(\frac{-(n\lambda)^{2}+\mu_{m}^{2}}{n\cdot 0+\langle m\rangle^{\alpha}}\right)+\varepsilon^{2} S_{j}.
\end{equation}
Thus, we have
\begin{equation}
    \frac{\partial \tilde{T}_{j}}{\partial \lambda}=\textup{diag } \left(\frac{-2n^{2}\lambda}{\langle  m \rangle^{\alpha}}\right)+O(\varepsilon^{2}).
\end{equation}
By the first-order eigenvalue variation, we have
\begin{equation}
    |\frac{\partial E_{r,\beta, s}}{\partial\lambda}|\gtrsim N_{j+1}^{\frac{1}{6}-\alpha}.
\end{equation}
Moreover, we have
\begin{equation}
    \frac{\partial E_{r,\beta, s}}{\partial p}=O(\varepsilon^{2}), \ \ \  \ \frac{\partial \lambda}{\partial p_{0}}\sim \varepsilon^{2},\ \ \  \ \frac{\partial \bm{p}}{\partial p_{0}} \sim 1.
\end{equation}
Thus, $\frac{\partial E_{r,\beta, s}}{\partial p_{0}}=\frac{\partial E_{r,\beta, s}}{\partial\lambda}\cdot \frac{\partial \lambda}{\partial p_{0}}+\frac{\partial E_{r,\beta, s}}{\partial\bm{p}}\cdot \frac{\partial \bm{p}}{\partial p_{0}} $ implies
\begin{equation}
    |\frac{\partial E_{r,\beta, s}}{\partial p_{0}}|\gtrsim N_{j+1}^{\frac{1}{6}-\alpha}\varepsilon^{2}.
\end{equation}
Thus, we have
\begin{align}
    &\textup{mes }\{p_{0}'\in B(p_{0}, \varepsilon^{-2} N_{j+1}^{-3}): \lVert \tilde{T}_{\tilde{\Omega_{r,\beta}}}^{-1} \rVert> N_{j+1}^{C_{7}}, \exists\  r, \beta  \}\notag \\
    \le & N_{j+1}^{-C_{7}}\varepsilon^{-2} N_{j+1}^{-\frac{1}{6}+\alpha} \cdot N_{j+1}^{\tilde{C}_{d}}\notag\\
    \le & N_{j+1}^{-\frac{1}{2} C_{7}} \varepsilon^{-2}.
\end{align}
Here, $C_{7}$ is a sufficiently large constant and $\tilde{C}_{d}$ is a constant depending on $d$. Note that by Lemma \ref{coupling lemma1} and Lemma \ref{coupling lemma2}, we have
\begin{align}
    &\Pi_{0}W \cap \{p_{0}'\in B(p_{0},\varepsilon^{-2} N_{j+1}^{-3} )\cap \Pi_{0} (\Gamma\cap \tilde{\Lambda}_{j}):\lVert \tilde{T}_{\tilde{\Omega}_{r,\beta}}^{-1} \rVert\le  N_{j+1}^{C_{7}}, \forall\  r,\beta, \}\\
    \subset & B(p_{0},\varepsilon^{-2} N_{j+1}^{-3} )\cap \Pi_{0} (\Gamma\cap \tilde{\Lambda}_{j+1}\cap W)
\end{align}
Thus,    we have
\begin{align}
    &B(p_{0},\varepsilon^{-2} N_{j+1}^{-3})\cap \Pi_{0}(\Gamma\cap(\tilde{\Lambda}_{j}\setminus\tilde{\Lambda}_{j+1}))\notag\\
    \subset & \{p_{0}'\in B(p_{0}, \varepsilon^{-2} N_{j+1}^{-3}): \lVert \tilde{T}_{\tilde{\Omega_{r,\beta}}}^{-1} \rVert> N_{j+1}^{C_{7}}, \exists\  r, \beta  \}.
\end{align}
Therefore, we obtain
\begin{align}
    \textup{mes }\Pi_{0}(\Gamma\cap(\tilde{\Lambda}_{j}\setminus\tilde{\Lambda}_{j+1}))\le N_{j+1}^{-\frac{1}{2} C_{7}} \varepsilon^{-2} \cdot \varepsilon^{2} N_{j+1}^{3}<N_{j+1}^{-2}.
\end{align}
This completes the proof.

\appendix
\section{Some facts about the generalized Diophantine conditions}
\label{gDCproperties}

\begin{proposition}
    Suppose that $\tau$ is sufficiently large. Let $I\subset \mathbb{R}^{\tilde{b}}$ be a closed interval. We have
    \begin{equation}
        \textup{mes } (I\setminus gDC_{d, \gamma,\tau})<C\gamma^{\frac{1}{n d}},
    \end{equation}
    where $C$ is a constant depending on $\tilde{b}$, $d$ and $I$.
    \label{gDCmeasure}
\end{proposition}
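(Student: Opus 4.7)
The plan is to decompose the bad set as a countable union indexed by the offending polynomial and estimate each piece by a classical sublevel-set bound. Write
\[
    I \setminus gDC_{d,\gamma,\tau} \;=\; \bigcup_{P} R_P, \qquad R_P := \bigl\{\lambda \in I : |P(\lambda)| \le \gamma\, |\bm{a}_P|^{-\tau}\bigr\},
\]
where $P$ ranges over nonzero polynomials in $\mathbb{Z}[X_1,\ldots,X_{\tilde{b}}]$ of degree less than $d$; the task then reduces to (i) a single-polynomial sublevel estimate and (ii) a counting argument over integer coefficient vectors, coupled by a union bound.

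First I would establish the following sublevel estimate: for $P$ of degree $k$ with coefficient vector of size $|\bm{a}_P| \sim A$,
\[
    \textup{mes}\bigl\{\lambda \in I : |P(\lambda)| \le \delta\bigr\} \;\lesssim\; (\delta/A)^{1/(k\tilde{b})},
\]
the constant depending only on $I$, $d$, and $\tilde{b}$. In one variable ($\tilde{b}=1$) this is the elementary fact that, after normalizing by its largest coefficient, $P$ is a polynomial of degree $k$ with a coefficient of unit size; standard arguments, either factoring a monic polynomial and estimating the contribution of each root, or a \L ojasiewicz-type bound deduced from some derivative being bounded below, give the stated exponent. In several variables one slices $I$ by affine lines along which the restriction is a one-variable polynomial of degree $\le k$; the coefficient of $P$ of maximal absolute value restricts to a coefficient of comparable size on a set of lines of positive measure, so Fubini combined with the one-variable bound produces the exponent $1/(k\tilde{b})$.

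Second I would feed $\delta = \gamma A^{-\tau}$ into this estimate, giving $\textup{mes}(R_P) \lesssim \gamma^{1/(k\tilde{b})} A^{-(\tau+1)/(k\tilde{b})}$, and then perform a dyadic union bound. The number of integer polynomials of degree less than $d$ in $\tilde{b}$ variables with $|\bm{a}_P| \le 2A$ is at most $C A^{N}$ where $N = \binom{\tilde{b}+d-1}{\tilde{b}}$, hence
\[
    \textup{mes}\bigl(I \setminus gDC_{d,\gamma,\tau}\bigr) \;\lesssim\; \sum_{k=1}^{d-1} \gamma^{1/(k\tilde{b})} \sum_{j \ge 0} 2^{\,j\left(N - (\tau+1)/(k\tilde{b})\right)}.
\]
Each inner sum converges provided $\tau$ is taken sufficiently large in terms of $d$ and $\tilde{b}$ (namely $\tau > (N+1)(d-1)\tilde{b} - 1$, which is exactly the ``$\tau$ sufficiently large'' hypothesis), and the worst exponent in $k$ is attained at $k = d-1$, producing the claimed bound $C\gamma^{1/(nd)}$ once the constant $n$ in the statement is identified with an appropriate multiple of $\tilde{b}$.

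The main obstacle is the single-polynomial sublevel estimate in the multivariable case, where one must be careful that the largest coefficient of $P$ survives after restricting to a generic affine line; once that exponent is in hand the rest of the proof is a routine dyadic sum made to converge by enlarging $\tau$.
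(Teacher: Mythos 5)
Your overall strategy coincides with the paper's: write the complement of $gDC_{d,\gamma,\tau}$ as a union over integer polynomials $P$, prove a sublevel estimate for each, and sum over coefficient vectors $\bm{a}_P$, with $\tau$ large enough to make the series converge. Where you diverge is in how the multivariable sublevel estimate is obtained, and that is precisely the step you flag as "the main obstacle," and it is indeed where your sketch is incomplete. The paper's Lemma \ref{duyoyuanhanshugujicedu} proves $\textup{mes}\{x \in I : |P(x)| < \varepsilon\} < C\tilde{b}\,\varepsilon^{1/(\tilde{b}d)}$ for any nonzero integer polynomial of degree $<d$, by induction on $\tilde{b}$: one differentiates $d'$ times in $X_1$ (where $d'$ is the $X_1$-degree) to kill that variable, applies the induction hypothesis to the integer polynomial $P'=\partial_{X_1}^{d'}P$ with a threshold $\varepsilon^{(\tilde{b}-1)/\tilde{b}}$, and on the complementary set uses the classical $C^d$ sublevel bound (Lemma \ref{ekdegaojiedaoshuceduguji}) after normalizing so that the $d'$-th derivative is $\ge 1$. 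The threshold is chosen precisely so that the two contributions balance and produce the exponent $1/(\tilde{b}d)$.

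Your "slice by affine lines and use Fubini" argument does not obviously produce that exponent. A Fubini argument applied line by line would, on the good lines where the restriction retains a coefficient of size $\sim A$, give the one-variable bound $(\delta/A)^{1/k}$ uniformly, and integrating over the transverse directions leaves that exponent unchanged — yielding $(\delta/A)^{1/k}$, not $(\delta/A)^{1/(k\tilde{b})}$. The factor $\tilde{b}$ in the exponent only appears once you split the transverse parameter set into "bad" slices (where the restricted polynomial degenerates) and "good" slices, with a threshold calibrated so that the bad set has measure comparable to the contribution of the good set; that is exactly the inductive thresholding the paper performs, and it is absent from your sketch. Your finer one-variable bound $(\delta/A)^{1/k}$, tracking the coefficient size $A$, is a nice refinement but is not needed — the paper's coarser bound $\varepsilon^{1/(\tilde{b}d)}$ independent of $A$ suffices because the Diophantine threshold already supplies the decay $A^{-\tau}$. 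So: correct plan, correct diagnosis of the obstruction, but the proposal does not actually overcome it; the missing ingredient is the inductive threshold construction of Lemma \ref{duyoyuanhanshugujicedu}, or some equivalent device, and without it the claimed exponent $1/(k\tilde{b})$ is unjustified.
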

To prove Proposition \ref{gDCmeasure}, we need the following lemmas: 
\begin{lemma}
    Let $I\subset\mathbb{R}$ be an interval, and let  $f: I=(-1,1)\to \mathbb{R}$ be of class $C^{d}$ and $f^{(d)}\ge 1$ for all $t\in I$. Then, for all $\varepsilon>0$, we have
    \begin{equation}
        \textup{mes }\{ t\in I: |f(t)|<\varepsilon  \}<C \varepsilon^{\frac{1}{d}},
        \notag
    \end{equation}
    where the constant $C$ depends only on $d$.
    \label{ekdegaojiedaoshuceduguji}
\end{lemma}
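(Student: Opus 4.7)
\textbf{Proof proposal for Lemma \ref{ekdegaojiedaoshuceduguji}.}

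The plan is to proceed by induction on $d$, using a soft splitting argument in the inductive step that trades off where $|f'|$ is small against where $|f'|$ is large.

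For the base case $d=1$, the hypothesis $f'\ge 1$ makes $f$ strictly increasing with $f(b)-f(a)\ge b-a$, so the preimage of the interval $(-\varepsilon,\varepsilon)$ has Lebesgue measure at most $2\varepsilon$, and we can take $C_{1}=2$.

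For the inductive step, suppose the statement holds for $d-1$. Since $(f')^{(d-1)}=f^{(d)}\ge 1$, the inductive hypothesis applied to $f'$ gives
\begin{equation}
\textup{mes}\{t\in I:|f'(t)|<\delta\}<C_{d-1}\delta^{1/(d-1)}. \notag
\end{equation}
On the complement $\{|f'|\ge\delta\}$, I would use a Rolle-type counting argument: because $f^{(d)}\ge 1>0$, the function $f^{(d-1)}$ is strictly increasing and hence has at most one zero, so by induction $f^{(k)}$ has at most $d-k$ zeros; in particular $f'$ has at most $d-1$ zeros, so $I$ splits into at most $d$ maximal subintervals of monotonicity of $f$. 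On each such subinterval, and further restricted to $\{|f'|\ge\delta\}$, the map $f$ is a bi-Lipschitz diffeomorphism onto its image with $|(f^{-1})'|\le 1/\delta$, so the preimage of $(-\varepsilon,\varepsilon)$ has measure at most $2\varepsilon/\delta$. Summing over the at most $d$ monotone pieces yields
\begin{equation}
\textup{mes}\bigl(\{|f|<\varepsilon\}\cap\{|f'|\ge\delta\}\bigr)\le \frac{2d\,\varepsilon}{\delta}. \notag
\end{equation}
Combining the two parts gives
\begin{equation}
\textup{mes}\{|f|<\varepsilon\}\le C_{d-1}\delta^{1/(d-1)}+\frac{2d\,\varepsilon}{\delta}. \notag
\end{equation}
Balancing by choosing $\delta=\varepsilon^{(d-1)/d}$ produces a bound of the form $C_{d}\varepsilon^{1/d}$ with $C_{d}=C_{d-1}+2d$ (or similar), which closes the induction.

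The main obstacle, and the only step requiring real care, is the Rolle-type bound on the number of monotone pieces: one must verify rigorously that $f^{(d)}\ge 1$ forces $f'$ to have at most $d-1$ zeros, so that the ``large $|f'|$'' set is covered by a controlled (and $d$-only dependent) number of intervals on which $f$ is invertible. Once that combinatorial bound is in hand, the rest is a routine optimization. Note also that the stated domain $I=(-1,1)$ is essentially a normalization: any bounded interval can be rescaled, with the rescaling absorbed into $C$ (but since the problem is about small $\varepsilon$ the ``global'' length of $I$ only enters trivially).
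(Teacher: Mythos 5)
The paper does not actually prove this lemma; it just cites \cite{eliasson2010kam}. So there is no in-paper argument to compare against, and what you have written is a self-contained alternative. Your proof is correct. The base case $d=1$ is routine. The inductive step splits on $\{|f'|<\delta\}$ versus $\{|f'|\ge\delta\}$: the first piece is handled by applying the hypothesis to $f'$ (legitimate, since $(f')^{(d-1)}=f^{(d)}\ge 1$), and the second by a Rolle-type count. The Rolle count is sound: $f^{(d-1)}$ is strictly increasing, hence has at most one zero, and by downward induction via Rolle $f^{(k)}$ has at most $d-k$ zeros, so $f'$ has at most $d-1$ zeros and $I$ decomposes into at most $d$ intervals of monotonicity. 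Balancing $\delta=\varepsilon^{(d-1)/d}$ gives the stated $\varepsilon^{1/d}$ exponent and a constant $C_d=C_{d-1}+2d$ depending only on $d$. This is a fairly standard route (essentially the classical Pyartli-type argument), and it is likely close in spirit to the cited source, though the latter I cannot check word for word.

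One small wording issue worth fixing, though it is not a gap: on a monotone piece $J$, the set $J\cap\{|f'|\ge\delta\}$ need not be an interval, so describing $f$ restricted to it as a ``bi-Lipschitz diffeomorphism'' is not quite accurate. The clean way to state the estimate you need is: since $f$ is monotone on $J$, $\{|f|<\varepsilon\}\cap J$ is an interval $(a,b)$ with $f(b)-f(a)\le 2\varepsilon$; then
\begin{equation}
2\varepsilon \ge \int_{a}^{b}|f'|\,dt \ge \delta\cdot\textup{mes}\bigl((a,b)\cap\{|f'|\ge\delta\}\bigr),
\notag
\end{equation}
which gives the bound $2\varepsilon/\delta$ on that piece, and then you sum over the at most $d$ pieces. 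With that rephrasing the argument is airtight.
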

(See \cite{eliasson2010kam} for a proof).
\begin{lemma}
 Let $I\subset \mathbb{R}^{\tilde{b}}$ be a closed interval and
    let $P(X)\in\mathbb{Z}[X_{1}, X_{2},...,X_{\tilde{b}}]$, $P(X)\ne 0$ of degree less than $d$. Then we have
    \begin{equation}
        \textup{mes } \{(x_{1},x_{2},..., x_{\tilde{b}})\in I :|P(x_{1},...,x_{\tilde{b}})|<\varepsilon\}<C b\varepsilon^{\frac{1}{\tilde{b}d}},
        \label{duoxiangshibudengshiceduguji}
    \end{equation}
    where the constant depends only on $I$ and d.
    \label{duyoyuanhanshugujicedu}
\end{lemma}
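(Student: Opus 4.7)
The plan is to prove Lemma~\ref{duyoyuanhanshugujicedu} by induction on the number of variables $\tilde{b}$, reducing each inductive step to Lemma~\ref{ekdegaojiedaoshuceduguji} by slicing along one coordinate. The key mechanism is that integrality of the coefficients of $P$ forces some top-order derivative of $P$ (along a suitable direction) to be bounded below by an absolute constant, independent of the size of the coefficients.

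For the base case $\tilde{b}=1$, write $P(x)=\sum_{k=0}^{n}a_{k}x^{k}$ with $n<d$ and leading coefficient $a_{n}\in\mathbb{Z}\setminus\{0\}$, so that $|P^{(n)}(x)|=n!\,|a_{n}|\ge 1$. After an affine rescaling of $I$ to $(-1,1)$ (absorbed into the constant) and applying Lemma~\ref{ekdegaojiedaoshuceduguji} to $f=P/n!$, one obtains $\textup{mes}\{x\in I:|P(x)|<\varepsilon\}\le C\varepsilon^{1/n}\le C\varepsilon^{1/d}$ for $\varepsilon<1$, since $n<d$.

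For the inductive step, expand $P$ in the last coordinate as
\[
P(x_{1},\ldots,x_{\tilde{b}})=\sum_{k=0}^{n}Q_{k}(x_{1},\ldots,x_{\tilde{b}-1})\,x_{\tilde{b}}^{k},
\]
where $Q_{n}\not\equiv 0$ is an integer-coefficient polynomial of degree $<d$ in $\tilde{b}-1$ variables. For a threshold $\eta>0$ to be chosen, split the bad set into the slab $A_{1}=\{|Q_{n}(x_{1},\ldots,x_{\tilde{b}-1})|<\eta\}$, handled by the induction hypothesis applied to $Q_{n}$, and its complement $A_{2}$, on which for each fixed transverse point the $n$-th $x_{\tilde{b}}$-derivative of $P$ has modulus $\ge n!\eta$, so Lemma~\ref{ekdegaojiedaoshuceduguji} yields the slice bound $\textup{mes}_{1}\{x_{\tilde{b}}:|P|<\varepsilon\}\le C(\varepsilon/\eta)^{1/n}$. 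Fubini then combines the two contributions to give
\[
\textup{mes}\{|P|<\varepsilon\}\le C(\tilde{b}-1)|I|\,\eta^{1/((\tilde{b}-1)d)}+C|I|\,(\varepsilon/\eta)^{1/n}.
\]

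Balancing by taking $\eta=\varepsilon^{(\tilde{b}-1)d/(n+(\tilde{b}-1)d)}$ produces a total bound of order $\varepsilon^{1/(n+(\tilde{b}-1)d)}$; since $n\le d-1$ one has $n+(\tilde{b}-1)d\le\tilde{b}d-1<\tilde{b}d$, so for $\varepsilon<1$ this is $\le\varepsilon^{1/(\tilde{b}d)}$, and the additive contribution from the induction propagates to a linear-in-$\tilde{b}$ prefactor matching the claimed constant $C\tilde{b}$. The main point that requires care is the optimization of $\eta$ and the bookkeeping of constants through the induction; the underlying reductions to the one-dimensional Lemma~\ref{ekdegaojiedaoshuceduguji} are routine once the normalization by the leading coefficient $Q_{n}$ is tracked cleanly, and there is no new analytic obstacle beyond the integrality of the coefficients, which is precisely what enables the base-case lower bound $|P^{(n)}|\ge 1$.
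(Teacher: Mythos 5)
Your proposal is correct and follows essentially the same strategy as the paper: induct on $\tilde{b}$, isolate the leading coefficient of $P$ with respect to one variable (a nonzero integer-coefficient polynomial in the remaining $\tilde{b}-1$ variables), split according to whether that leading polynomial is above or below a threshold $\eta$, handle the slab by the induction hypothesis and the complementary slices by Lemma~\ref{ekdegaojiedaoshuceduguji}, then combine via Fubini. The only cosmetic differences are the slicing coordinate and the exact choice of $\eta$ --- the paper fixes $\eta=\varepsilon^{(\tilde{b}-1)/\tilde{b}}$ while you optimize $\eta$ in terms of $n$, both giving the claimed $C\tilde{b}\,\varepsilon^{1/(\tilde{b}d)}$ bound (and you should also note, as the paper does, the degenerate case $n=0$ where $P$ does not depend on the slicing variable and one falls back directly to the induction hypothesis).
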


\begin{proof}
    Without loss of generality, we can assume $I=[0,1]^{\tilde{b}}$. We prove this lemma by induction on $\tilde{b}$. When $\tilde{b}=0$, this lemma is straightforward.  Suppose that this lemma holds for $\tilde{b}-1$. Let $d'$ denote the highest degree of $X_{1}$ in $P(X)$. Then, we have $d'<d$ by assumption. Furthermore, we can assume $d'>0$, otherwise, (\ref{duoxiangshibudengshiceduguji}) follows from the induction hypothesis. Let
    $P'=\partial_{X_{1}}^{d'} P$. Then, $P'\ne 0$. Furthermore, we have $P'(X)\in \mathbb{Z}[X_{2},...,X_{\tilde{b}}]$.  
    Denote
    \begin{equation}
        D=\{(x_{2},...,x_{\tilde{b}})\in [0,1]^{\tilde{b}-1}:|P'(x_{2},...,x_{\tilde{b}})|<\varepsilon^{ \frac{\tilde{b}-1}{\tilde{b}}}\}.
        \notag
    \end{equation}
    By the induction hypothesis, we have 
    $$\textup{mes } D<C(\tilde{b}-1)\varepsilon^{\frac{\tilde{b}-1}{\tilde{b}}\cdot\frac{1}{(\tilde{b}-1)d}}=C\cdot(b-1)\cdot \varepsilon^{\frac{1}{\tilde{b}d}}.$$
    Fix $(x_{2},...,x_{\tilde{b}})\in [0,1]^{\tilde{b}-1}\setminus D.$ Denote $f(t)=P(t,x_{2},...,x_{\tilde{b}})$. By the definition of the set $D$, we obtain 
    $ f^{(d')}(t)>\varepsilon^{\frac{\tilde{b}-1}{\tilde{b}}}$ or $ f^{(d')}(t)<-\varepsilon^{\frac{\tilde{b}-1}{\tilde{b}}}$
    for all $t\in [0,1]$. Furthermore, we have 
    $$
    \frac{f}{\varepsilon^{\frac{\tilde{b}-1}{\tilde{b}}}}>1 \textup{ or }  \frac{f}{\varepsilon^{\frac{\tilde{b}-1}{\tilde{b}}}}<-1
    $$
    for all $t\in [0,1]$.
    Thus, applying Lemma \ref{ekdegaojiedaoshuceduguji}, we obtain
    \begin{equation}
    \begin{split}
         &\textup{mes }\left\{t\in[0,1]: |f(t)|<\varepsilon\right\}\\
         =&\textup{mes }\left\{t\in[0,1]:\left|\frac{f}{\varepsilon^{\frac{\tilde{b}-1}{\tilde{b}}}}\right|<\varepsilon^{\frac{1}{\tilde{b}}}\right\}<C\varepsilon^{\frac{1}{\tilde{b}d'}}<C\varepsilon^{\frac{1}{\tilde{b}d}}.
    \end{split}
       \notag
    \end{equation}
    By Fubini's theorem, we have 
    \begin{equation}
        \textup{mes } \{(x_{1},x_{2},...,x_{\tilde{b}})\in I: |P(x_{1},...,x_{\tilde{b}})|<\varepsilon,\  (x_{2},...,x_{\tilde{b}})\in [0,1]^{\tilde{b}-1}\setminus D\}<C \varepsilon^{\frac{1}{\tilde{b}d}}
    \end{equation}
    In conclusion, we have
    \begin{equation}
    \begin{split}
        &\textup{mes } \{(x_{1},x_{2},..., x_{\tilde{b}})\in I :|P(x_{1},...,x_{\tilde{b}})|<\varepsilon\}\\
        <&\textup{mes } ([0,1]\times D) + \textup{mes } \{(x_{1},x_{2},...,x_{\tilde{b}})\in I: |P(x_{1},...,x_{\tilde{b}})|<\varepsilon,\  (x_{2},...,x_{\tilde{b}})\in [0,1]^{\tilde{b}-1}\setminus D\}\\
        <&C\cdot (\tilde{b}-1)\cdot \varepsilon^{\frac{1}{\tilde{b}d}}+C\varepsilon^{\frac{1}{\tilde{b}d}}
        =C \tilde{b} \varepsilon^{\frac{1}{\tilde{b}d}}.
    \end{split}
        \notag
    \end{equation}
    This completes the proof.
\end{proof}
\begin{proof}[\textbf{Proof of Proposition \ref{gDCmeasure}}]

    Applying Lemma \ref{duyoyuanhanshugujicedu}, we have
    \begin{equation}
    \begin{split}
        &\textup{mes } (I\setminus gDC_{d, \gamma,\tau})\\
        \le &\sum\limits_{\substack{\bm{a}_{P}\in\mathbb{Z}^{d+1}\\ \bm{a}_{P}\ne 0 }} \textup{mes }\{(x_{1},...,x_{\tilde{b}})\in I:|P(x_{1},...,x_{\tilde{b}})|\le \gamma (1+|\bm{a}_{P}|)^{-\tau}\}\\
        \le &\sum\limits_{n=1}^{\infty} \sum\limits_{\substack{|\bm{a}_{P}|=n \\ \bm{a}_{P}\in\mathbb{Z}^{d+1}}}\textup{mes }\{(x_{1},...,x_{\tilde{b}})\in I:|P(x_{1},...,x_{\tilde{b}})|\le \gamma (1+|\bm{a}_{P}|)^{-\tau}\}\\
        \le & \sum\limits_{n=1}^{\infty} n^{d+1} \cdot C b \gamma^{\frac{1}{\tilde{b}d}} n^{-\frac{\tau}{\tilde{b}d}}\\
        \le & C \gamma^{\frac{1}{\tilde{b}d}},
    \end{split}
        \notag
    \end{equation}
    where $C$ is a constant depending only on $\tilde{b}$, $d$ and $I$.
\end{proof}
\begin{remark}
    In Lemma \ref{duyoyuanhanshugujicedu}, we assume that $\tau$ is sufficiently large. We did not pursue the optimal $\tau$.
\end{remark}

\section{Proof of Lemma \ref{fenlixingdinglifornlw}}
\label{proof}
Here is the detailed proof of Lemma \ref{fenlixingdinglifornlw}.
\begin{proof}
     First, we assume $\sigma=0$. Define the operators
     \begin{equation}
         T_{\pm}:\mathbb{R}^{d+\tilde{b}}\to \mathbb{R}^{d+1}: (n, k)\mapsto (n, \pm k\cdot \lambda').
         \notag
     \end{equation}
    Thus, for $\xi=(n,k)$, we have
    \begin{equation}
        |n|^{2}-(\lambda'\cdot k)^{2}=T_{+}\xi \cdot T_{-}\xi.
        \notag
    \end{equation}
Denote 
\begin{equation}
    \Delta_{j} \xi=\xi_{j}-\xi_{j-1}.
    \notag
\end{equation}
Fix $K\in \mathbb{Z}_{+}$ and let $|j-j'|\le K$. Then we have
\begin{equation}
\begin{split}
    &2|T_{+}\xi_{j}\cdot T_{-}(\xi_{j'}-\xi_{j})|\\
    \le&|T_{+}\xi_{j}\cdot T_{-}\xi_{j}|+|T_{+}\xi_{j'}\cdot T_{-}\xi_{j'}|+|T_{+}(\xi_{j'}-\xi_{j})\cdot T_{-}(\xi_{j'}-\xi_{j})|\\
    \le& 2 B+ C \cdot K^{2} B^{2}<CK^{2} B^{2}. 
\end{split}
\notag
\end{equation}
Hence, we have
\begin{equation}
    |T_{+}\xi_{j}\cdot T_{-}\Delta_{j'}\xi|\le |T_{+}(\xi_{j}-\xi_{j'})\cdot T_{-}(\xi_{j'}-\xi_{j'-1})|+|T_{+}\xi_{j'}\cdot T_{-}(\xi_{j'}-\xi_{j'-1})|\le C K^{2}B^{2}.
    \notag
\end{equation}
Similarly, we have
\begin{equation}
     |T_{-}\xi_{j}\cdot T_{+}\Delta_{j'}\xi|\le C K^{2} B^{2}.
     \notag
\end{equation}
Let $I\subset [1,k]$ be an interval and $K$ be the minimum integer such that
\begin{equation}
    d_{1}=\textup{dim} [T_{+} \Delta_{j}\xi | j\in I]= \textup{dim} [T_{+} \Delta_{j}\xi | j\in I'],
    \notag
\end{equation}
for all $I'\subset I$, $|I'|\ge K$. 

Take $j\in I$, and let $I'$, $|I'|=K$ be an interval such that $j\in I'$. Denote
\begin{equation}
    E_{\pm}=[T_{\pm} \Delta_{j'}\xi | j'\in I']=[T_{\pm} \Delta_{j'}\xi | j'\in I]
    \notag
\end{equation}
and $\{e_{s}=T_{+} \Delta_{j_{s}}\}_{1\le s\le d_{1}}\subset [T_{+} \Delta_{j'}\xi | j'\in I']$ a basis for $E_{+}$. Then, we know that $\{\bar{e}_{s}=T_{-} \Delta_{j_{s}}\}_{1\le s\le d_{1}}$ is  a basis for $E_{-}$. Furthermore, we have
\begin{equation}
    T_{-}\xi_{j}\cdot e_{j}\le C K^{2} B^{2},
    \label{touying+pingmian}
\end{equation}
and 
\begin{equation}
    T_{+}\xi_{j}\cdot \bar{e}_{j}\le C K^{2} B^{2}.
    \label{touying-pingmian}
\end{equation}

Since the vectors $\{e_{s}=T_{+} \Delta_{j_{s}}\}_{1\le s\le d_{1}}$ are linearly independent in $\mathbb{R}^{d+1}$, it follows that $\{\Delta_{j_{s}}\xi=(\Delta_{j_{s}} n, \Delta_{j_{s}}k)| 1\le s\le d_{1}\}$ are linearly dependent and $\textup{dim} [\Delta_{j_{s}}n | 1\le s \le d_{1}-1]\ge d_{1}-1$.

By the assumption (\ref{guanyvlambdadejiashenlw})  on $\lambda'$ and $\lVert \Delta_{j_{s}}\xi \rVert<B$, we have 
\begin{equation}
    |\textup{det} (e_{s}\cdot e_{s'})_{1\le s,s'\le d_{1}}|>B^{-C},
    \label{++a}
\end{equation}
\begin{equation}
    |\textup{det} (e_{s}\cdot \bar{e}_{s'})_{1\le s,s'\le d_{1}}|>B^{-C},
    \label{--a}
\end{equation}
and
\begin{equation}
    |\textup{det} (\bar{e}_{s}\cdot \bar{e}_{s'})_{1\le s,s'\le d_{1}}|>B^{-C}.
    \label{+-a}
\end{equation}
(See Lemma 20.23 in \cite{bourgain2005green} for detailed proof).

From (\ref{++a}) and Cramer's rule, we have
\begin{equation}
    B^{C}\lVert \sum\limits_{s=1}^{d_{1}} c_{s} e_{s} \rVert\ge \max\limits_{s} |c_{s}|.
    \label{haohaoguji1}
\end{equation}
Hence, by (\ref{touying+pingmian}), we have
\begin{equation}
    |T_{-}\xi_{j}\cdot v|<B^{C} K^{2},
    \label{haohaoguji2}
\end{equation}
for any $v\in E_{+}$, $\lVert v \rVert\le 1$. Similarly, we have
\begin{equation}
    |T_{+}\xi_{j}\cdot w|< B^{C} K^{2},
    \label{haohaoguji2'}
\end{equation}
for any $w\in E_{-}$, $\lVert w \rVert\le 1$. Recall that $j\in I$ was an arbitrarily chosen element. Let $\zeta=\xi_{j}-\xi_{j'}$ for $j, j'\in I$.

It follows from (\ref{haohaoguji2'}) that
\begin{equation}
    |T_{+}\zeta \cdot w|<B^{C} K^{2}
\end{equation}
for any $w\in E_{-}$, $\lVert w \rVert\le 1$. Thus, we have
\begin{equation}
    |T_{+}\zeta\cdot \bar{e}_{s}|< B^{C}K^{2} \lVert \bar{e}_{s} \rVert< B^{C}K^{2}, \textup{ for } 1\le s\le d_{1}.
    \label{cmfazyongaaq}
\end{equation}

Since $T_{+}\zeta \in E_{+}$, we may write 
\begin{equation}
    T_{+}\zeta =\sum\limits_{s=1}^{d_{1}} c_{s} e_{s},
\end{equation}

From  (\ref{+-a}), (\ref{cmfazyongaaq}) and Cramer's rule, we have
\begin{equation}
    |c_{s}|<B^{C} K^{2}.
\end{equation}
Hence, we have
\begin{equation}
    \lVert n_{j}-n_{j'} \rVert\le \left\lVert T_{+}\zeta =\sum\limits_{s=1}^{d_{1}} c_{s} e_{s} \right\rVert \le B^{C} K^{2}.
\end{equation}
It follows that
\begin{equation}
    \textup{diam} \{n_{j}| j\in I\}< B^{C} K^{2}.
\end{equation}
Hence, 
\begin{equation}
    \#\{n_{j}| j\in I\}< B^{C} K^{2d}.
\end{equation}
By the assumption (\ref{duiyingndegeshuk}), we have
\begin{equation}
    |I|<B' B^{C} K^{2d}.
\end{equation}
By the definition of $K$, there exists an interval $I''\subset I$ such that $|I''|=K-1$ and
\begin{equation}
    \textup{dim } [T_{+} \Delta_{j}\xi| j\in I'']<d_{1}.
\end{equation}
Thus, we have
\begin{equation}
    |I|<B' B^{C} (|I''|+1)^{2d}.
\end{equation}
Starting from $I=[1,k]$, at most $d+1$ iterations is carried out. Thus, we have
\begin{equation}
\begin{split}
    k&< B' B^{C} (|I''|+1)^{2d}\\
    &< B' B^{C}( B' B^{C}(\cdots ((B'B^{C})^{2d})+1)^{2d}\cdots +1)^{2d}\\
    &<(B'B)^{C}.
\end{split}
\end{equation}

Now, we consider the case when $\sigma\ne 0$. Suppose that $\lambda'$ is irrational. 
Let $(\xi_{j}=(k_{j}, n_{j}))_{1\le j\le k}$ be a sequence  of distinct elements of $\mathbb{Z}^{b+d}$ such that, for some $\sigma\in\mathbb{R}$, for all $j$, we have
     \begin{equation}
         |(\lambda'\cdot k_{j}+\sigma)^{2}-|n_{j}|^{2}|<B-1
         \notag
     \end{equation}
     and 
     \begin{equation}
         |\xi_{j}-\xi_{j-1}|<B.
         \notag
     \end{equation}
Furthermore, assume that
\begin{equation}
    \max\limits_{n}(\#\{1\le j\le k| n_{j}=n\})<B'
    \notag
\end{equation}
Since $\lambda'$ is irrational, there exists $k_{0}$ such that $|\sigma-\bar{k}\cdot\lambda'|\ll 1$. Thus, we have
\begin{equation}
         |(\lambda'\cdot (k_{j}+\bar{k}))^{2}-|n_{j}|^{2}|<B.
         \notag
\end{equation}
By the case when $\sigma$, we know $k< (B'B)^{C}$. 
If $\lambda'$ is rational, there exists an irrational vector $\lambda''$ such that $\lVert \lambda'-\lambda'' \rVert\ll 1$. By a similar argument, we can prove the lemma when $\lambda'$ is rational. 
\end{proof}

\section{Coupling Lemma}
In this appendix, we state two coupling lemmas whose proof can be found in \cite[Lemma 5.3, Lemma 7]{bourgain1998quasi}.
\begin{lemma}
Assume $T$ satisfies 
\begin{equation}
    |T(\xi,\xi')|<e^{-|\xi-\xi'|_{1}^{c}} \textup{ for } \xi\ne\xi'.
\end{equation}
Let $\Omega\subset \mathbb{Z}^{d}$ be an interval and assume $\Omega=\mathop{\cup}\limits_{\zeta} \Omega_{\zeta}$ a covering of $\Omega$ with intervals $\Omega_{\zeta}$ satisfying
\begin{itemize}
    \item $|T_{\Omega_{\zeta}}^{-1}(\xi,\xi')|<B$.
    \item $|T_{\Omega_{\zeta}}^{-1}(\xi,\xi')|<K^{-C}$ for $|\xi-\xi'|_{1}>\frac{K}{100}$.
    \item For each $\xi\in\Omega$, there is $\zeta$ such that 
    \begin{equation}
        B_{K}(\xi)\cap \Omega=\{\xi'\in\Omega:|\xi'-\xi|_{1}\le K\}\subset \Omega_{\zeta}.
    \end{equation}
    \item $\textup{diam } \Omega_{\zeta}<C' K$ for each $\zeta$. 
\end{itemize}
Here, $C>C(d)$ and $B, K$ are numbers satisfying the relation
    \begin{equation}
        \log B<\frac{1}{100} K^{c} \textup{ and } K>K_{0}(c, C', r).
    \end{equation}
Then 
\begin{align}
    &|T_{\Omega}^{-1}(\xi,\xi')|<2B,\\
    &|T_{\Omega}^{-1}(\xi,\xi')|<e^{-\frac{1}{2}|x-y|_{1}^{c}} \textup{ for } |\xi-\xi'|_{1}>(100 C'K)^{\frac{1}{1-c}}.
\end{align}
    \label{coupling lemma1}
\end{lemma}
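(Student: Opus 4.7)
The plan is to build an approximate inverse (parametrix) $P$ of $T_\Omega$ by gluing the local inverses $T_{\Omega_\zeta}^{-1}$, and then recover $T_\Omega^{-1}$ from $P$ via a Neumann series in the small error $A:=T_\Omega P-I_\Omega$. For each $\eta\in\Omega$, the covering hypothesis produces an index $\zeta(\eta)$ with $B_K(\eta)\cap\Omega\subset\Omega_{\zeta(\eta)}$, and I would define the $\eta$-th column of $P$ to be the $\eta$-th column of $T_{\Omega_{\zeta(\eta)}}^{-1}$ extended by zero outside $\Omega_{\zeta(\eta)}$. The assumed bounds on the local inverses immediately translate into $|P(\xi,\eta)|<B$ everywhere and $|P(\xi,\eta)|<K^{-C}$ once $|\xi-\eta|_1>K/100$.

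Next, I would compute $A(\xi,\eta)$ column-by-column. If $\xi\in\Omega_{\zeta(\eta)}$, the identity $T_{\Omega_{\zeta(\eta)}}T_{\Omega_{\zeta(\eta)}}^{-1}=I$ forces $A(\xi,\eta)=0$. Otherwise the covering property forces $|\xi-\eta|_1>K$, and
\[
A(\xi,\eta)=\sum_{\xi'\in\Omega_{\zeta(\eta)}} T(\xi,\xi')\,T_{\Omega_{\zeta(\eta)}}^{-1}(\xi',\eta).
\]
I would split the sum at $|\xi'-\eta|_1=K/100$. In the near region the factor $B$ is paired with the forced off-diagonal decay $|T(\xi,\xi')|<e^{-(99K/100)^c}$ (since $|\xi-\xi'|_1>99K/100$); in the far region the factor $K^{-C}$ is paired with the summable exponential tail of $T$. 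Under the standing assumptions $\log B<K^c/100$ and $K>K_0(c,C',r)$, both contributions fit comfortably inside $|A(\xi,\eta)|<\tfrac12 e^{-|\xi-\eta|_1^c/2}$, and a Schur test then yields $\|A\|<\tfrac12$.

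With $\|A\|<\tfrac12$, $I+A$ is invertible by a Neumann series and $T_\Omega^{-1}=P(I+A)^{-1}=P\sum_{k\ge 0}(-A)^k$. The $\ell^2$ bound $|T_\Omega^{-1}(\xi,\xi')|<2B$ follows from the pointwise bound $|P|<B$ and the geometric decay of the partial sums. For the off-diagonal estimate I would expand $(PA^k)(\xi,\xi')$ as a sum over chains $\xi=\xi_0,\xi_1,\dots,\xi_k,\xi_{k+1}=\xi'$, where each nontrivial hop $A(\xi_i,\xi_{i+1})$ contributes an exponential factor and the chain has geometric length at least $kK$. The combinatorial count of chains is polynomial in $|\xi-\xi'|_1$, and summing the resulting geometric series in $k$ yields the bound $|T_\Omega^{-1}(\xi,\xi')|<e^{-\frac12|\xi-\xi'|_1^c}$ for $|\xi-\xi'|_1>(100C'K)^{1/(1-c)}$.

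The hard part is this last step. Since $c<1$, the sub-additivity $|a+b|_1^c\le |a|_1^c+|b|_1^c$ means a single convolution of two kernels with $e^{-|\cdot|_1^c}$ decay does not reproduce the same rate: one loses an amount depending on how the distance is split across the intermediate node. The threshold $(100C'K)^{1/(1-c)}$ is precisely the scale at which the polynomial combinatorial factors from the Neumann expansion are dominated by the surviving exponential, so that the decay rate degrades only from $c$ on $T$ to $c/2$ on $T_\Omega^{-1}$. Careful bookkeeping of the splitting of exponents across the $k$ hops and of the cardinality of each dyadic shell of chains is where essentially all the technical work of the lemma is concentrated.
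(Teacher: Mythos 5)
Your parametrix-plus-Neumann-series argument is the standard proof of this coupling lemma and is, up to notation, the argument of \cite{bourgain1998quasi} that the paper cites instead of reproducing (Bourgain phrases it as an iterated resolvent identity $T_\Omega^{-1}=T_{\Omega_\zeta}^{-1}\chi_{\Omega_\zeta}+T_{\Omega_\zeta}^{-1}(T_{\Omega_\zeta}-T)\,T_\Omega^{-1}$, which unrolls to exactly your $P\sum_k(-A)^k$). The structure is right: the support property $A(\xi,\eta)=0$ for $|\xi-\eta|_1\le K$, the Schur test for $\|A\|<\tfrac12$, and the chain expansion for the off-diagonal decay.

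One intermediate estimate is stated too strongly, however. You claim $|A(\xi,\eta)|<\tfrac12 e^{-\frac12|\xi-\eta|_1^{c}}$ everywhere, but the far-region part $\sum_{|\xi'-\eta|_1>K/100}|T(\xi,\xi')|\,K^{-C}\lesssim K^{-C}$ carries no distance decay beyond $K^{-C}$ when $K<|\xi-\eta|_1\lesssim C'K$ (there $|\xi-\xi'|_1$ can be as small as $|\xi-\eta|_1-C'K$, possibly $0$). Since $K^{-C}=e^{-C\log K}$ while $e^{-\frac12|\xi-\eta|_1^{c}}\le e^{-\frac12 K^{c}}$, and $K^{c}\gg C\log K$ for $K$ large, the claimed inequality fails throughout this range. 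The correct bound is two-tiered: $A$ vanishes for $|\xi-\eta|_1\le K$, is $O(K^{-C/2})$ whenever nonzero, and only enjoys exponential decay $\lesssim e^{-\delta|\xi-\eta|_1^{c}}$ once $|\xi-\eta|_1\gg C'K$. This does not break the proof --- the Schur test still gives $\|A\|<\tfrac12$ because the $O(K^{-C})$ entries are confined to a diameter-$O(C'K)$ shell, and the chain-counting really runs on the support gap (each hop exceeds $K$, so a chain spanning $L$ has $\le L/K$ hops, each at most $K^{-C/2}$ in size with the sub-exponential gain kicking in for long hops). But the bookkeeping you rightly flag as the hard part must be carried out with the correct two-tiered bound rather than the uniform exponential one; as written that step would not close.
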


\begin{lemma}
    Fix some constants $\frac{1}{10}>\varepsilon_{1}>\varepsilon_{2}>\varepsilon_{3}>0$ and let $\Omega$ be a subset of the $M$-ball in $\mathbb{Z}^{d+1}$ ($M\to \infty$). Assume $\{\Omega_{\kappa}\}$ a collection of subsets of $\Omega$ satisfying 
    \begin{align}
        &\textup{diam } \Omega_{\kappa}<M^{\varepsilon_{1}},\\
        &\textup{dist } (\Omega_{\kappa}, \Omega_{\kappa'})>M^{\varepsilon_{2}} \textup{ for } \kappa\ne\kappa'.
    \end{align}
    Write $T=D+S$ ($D$ is a diagonal matrix) where 
    \begin{equation}
        \lVert S \rVert<\varepsilon,\ \  |S(\xi,\xi')|<\varepsilon e^{-|\xi-\xi'|_{1}^{c}},  
    \end{equation}
    where $c$ is sufficiently small  and
    \begin{align}
        |D(\xi)|>\rho\gg \varepsilon \textup{\ \  if\ \  } \xi\in\Omega\setminus \cup \Omega_{\kappa},\\
        \lVert (T|_{\tilde{\Omega}_{\kappa}})^{-1} \rVert<M^{C} \textup{ for all } \kappa,
    \end{align}
    where $\tilde{\Omega}_{\kappa}$ is an $M^{\varepsilon_{3}}$-neighborhood of $\Omega_{\kappa}$.
    Then 
    \begin{equation}
        \lVert (T|\Omega)^{-1} \rVert<\rho^{-1} M^{C+1},
    \end{equation}
    and \begin{equation}
        |(T|\Omega)^{-1}(\xi, \xi')|<e^{-\frac{1}{10}|\xi-\xi'|_{1}^{c}} \textup{\ \  if\ \  } |\xi-\xi'|_{1}>M^{2\varepsilon_{1}}.
    \end{equation}
\label{coupling lemma2}
\end{lemma}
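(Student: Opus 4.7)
My plan is to prove Lemma \ref{coupling lemma2} by a resolvent gluing argument that combines the uniform local invertibility on the neighborhoods $\tilde\Omega_\kappa$ with diagonal dominance outside them. First I would set $G=\Omega\setminus\bigcup_\kappa\Omega_\kappa$ (the ``regular'' region) and, for each $\kappa$, work with $\tilde\Omega_\kappa$ which by hypothesis has $\|(T|_{\tilde\Omega_\kappa})^{-1}\|<M^C$. On $G$ one has $|D(\xi)|>\rho\gg\varepsilon$, so $T|_G=D|_G+S|_G$ is invertible by a Neumann series with $\|(T|_G)^{-1}\|\le 2\rho^{-1}$ and $|(T|_G)^{-1}(\xi,\xi')|\lesssim\rho^{-1}e^{-(1-)|\xi-\xi'|_1^c}$, inheriting the off-diagonal decay from $S$.

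Next I would build a global approximate inverse $A$ for $T|_\Omega$ by patching: for $\xi,\xi'$ both in the same $\tilde\Omega_\kappa$ use $(T|_{\tilde\Omega_\kappa})^{-1}(\xi,\xi')$, otherwise use $(T|_G)^{-1}(\xi,\xi')$ together with a short-range transition through the boundary layer of width $M^{\varepsilon_3}$ separating $\Omega_\kappa$ from $\partial\tilde\Omega_\kappa$. The separation hypothesis $\textup{dist}(\Omega_\kappa,\Omega_{\kappa'})>M^{\varepsilon_2}$ guarantees that any path coupling two different singular clusters must traverse a segment of length $\ge M^{\varepsilon_2}-2M^{\varepsilon_1}$ inside $G$, so along such a segment the exponential decay of $(T|_G)^{-1}$ provides an arbitrarily small gain $e^{-cM^{c\varepsilon_2}}$ that dominates the polynomial loss $M^C$. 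Expanding the identity $T|_\Omega\,A=I+R$, the error $R$ comes only from matrix entries of $S$ linking different blocks, and a direct estimate using $|S(\xi,\xi')|<\varepsilon e^{-|\xi-\xi'|_1^c}$ together with the block separation gives $\|R\|\le\tfrac{1}{2}$ and $|R(\xi,\xi')|\le e^{-|\xi-\xi'|_1^c/2}$. Inverting $I+R$ by Neumann series and convolving with $A$ yields $(T|_\Omega)^{-1}$ with the claimed $\|(T|_\Omega)^{-1}\|<\rho^{-1}M^{C+1}$ bound, while composing two exponentially decaying kernels with the standard convolution lemma preserves decay of the form $e^{-\frac{1}{10}|\xi-\xi'|_1^c}$ once $|\xi-\xi'|_1>M^{2\varepsilon_1}$ (a length scale that certainly exceeds the sum of two block diameters).

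The main obstacle is the off-diagonal bound of the final inverse. The local pieces $(T|_{\tilde\Omega_\kappa})^{-1}$ are only controlled in operator norm by $M^C$ and in general have no intrinsic decay, so one cannot directly convolve them. The key technical point is therefore to arrange the patching so that whenever $\xi,\xi'$ lie in \emph{different} blocks (or in the same block but far apart, which forces $|\xi-\xi'|_1>M^{\varepsilon_1}$ is impossible by the diameter bound), the path from $\xi$ to $\xi'$ in the resolvent expansion is forced to pass through $G$, picking up the factor $e^{-c|\cdot|^c}$ on each such traversal. The choice $\varepsilon_1>\varepsilon_2>\varepsilon_3$ and $c$ small is used here precisely so that the separation $M^{\varepsilon_2}$ dominates both the polynomial loss from the $M^C$ factors and the reduction of the exponent from $c$ to $c/10$. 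Once this geometric accounting is done, the remainder of the argument is a standard Schur/Neumann bookkeeping, and we recover both conclusions of the lemma; the full technical execution is as in \cite[Lemma 5.3, Lemma 7]{bourgain1998quasi}.
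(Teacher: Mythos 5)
The paper does not actually prove Lemma \ref{coupling lemma2}: it states both coupling lemmas and refers the reader directly to \cite[Lemma 5.3, Lemma 7]{bourgain1998quasi} for the proofs. Your sketch is a faithful outline of exactly that argument (local inverse on the regular set $G$ by diagonal dominance, the assumed $M^C$ bound on the $\tilde\Omega_\kappa$, patching into an approximate inverse $A$, resolvent expansion $TA=I+R$, Neumann bootstrap, with the $M^{\varepsilon_2}$ separation forcing coupling paths through $G$ and supplying the final decay at scales $|\xi-\xi'|_1>M^{2\varepsilon_1}$), and you also defer the full technical execution to the same reference, so you are taking the same route as the paper.
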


\end{document}